\newtheorem{thm}{Theorem}[section]
\newtheorem{lem}[thm]{Lemma}
\newtheorem{cor}[thm]{Corollary}
\theoremstyle{definition}
\newtheorem{defn}[thm]{Definition}
\newtheorem{exa}[thm]{Example}
\theoremstyle{remark}
\newtheorem{rem}[thm]{Remark}
\newtheorem{noti}[thm]{Notice}
\theoremstyle{fact}
\newtheorem{fac}[thm]{Fact}
\numberwithin{equation}{section}
\begin{document}
\setcounter{page}{1}

\title[First Non-abelian Cohomology of Topological Groups II]{First Non-abelian Cohomology of Topological Groups II}
\author[Sahleh and Koshkoshi]{H. Sahleh $^{*}$ and H.E. Koshkoshi}
\thanks{{\scriptsize
\hskip -0.4 true cm MSC(2010): Primary: 22A05, 20J06; Secondary: 18G50
\newline Keywords: Non-abelian cohomology of topological groups, Cocompatible triple, Partially crossed topological bimodule, Principle homogeneous space.\\
$*$Corresponding author }}
\begin{abstract}
In this paper we introduce a new definition of the first non-abelian cohomology of topological groups.  We relate the cohomology of a normal subgroup $N$ of a topological group $G$ and the quotient $G/N$ to the cohomology of $G$. We get the inflation-restriction exact sequence. Also, we obtain a seven-term exact cohomology sequence up to dimension 2. We give an interpretation of the first non-abelian cohomology of a topological group by the notion of a principle homogeneous space.
\end{abstract}

\maketitle
\section{Introduction}\label{section 1}
 The first non-abelian cohomology of group $G$ with coefficients in a crossed $G$-module $(A,\mu)$ was (algebraically) introduced by Guin \cite{Gu1, Gu2}. The Guin's approach extended by H. Inassaridze to any dimension of non-abelian cohomology of $G$ with coefficients in a (partially) crossed topological $G-R$-bimodule $(A,\mu)$ (see \cite{Ina-h, Ina-n, Ina-c}). We generalize the Inassaridze's approach to define the first cohomology of non-abelain cohomology of topological groups. We continue to study non-abelian cohomology of topological groups (see \cite{Sah2, Sah1}).
\par  In this paper, topological groups are not necessarily abelian, unless otherwise specified. Let $G$  and $A$ be topological groups. It is said that $A$ is a topological $G$-module, whenever $G$   acts continuously on the  left of $A$. For all $g\in G$ and $a\in A$ we denote the action of $g$ on $a$ by $^{g}a$. If $A$ is a topological $G$-module, $H^{1}(G,A)$ denotes the first cohomology of $G$ with coefficients in $A$ \cite{Sah1}. The group isomorphism is denoted by  $\cong$. The center  and the commutator subgroup of a topological group $G$ is denoted by  $Z(G)$ and $[G,G]$, respectively. If the topological groups $G$ and $R$ act continuously on a topological group $A$, then the notation $^{gr}a$ means $^{g}(^{r}a)$, $g\in G$, $r\in R$, $a\in A$. We assume that every topological group acts on itself by conjugation. The coboundary map $\delta_{G}^{n}$ is as in  \cite[Definition (3.1)]{Hu}. If $G$ and $H$ are topological groups, then $\mathbf{1}:G\to H$ denotes the trivial homomorphism. If $f:G\to H$ is a map, then in what follows  $f^{-1}:G\to H$ denotes a map given by $f^{-1}(g)=f(g)^{-1}$. Note that if $P$ is a principle homogeneous space over a partially crossed topological $G-R$-bimodule $(A,\mu)$, then $e_{p}^{-1}:P\to A$ is the inverse map of the homeomorphism $e_{p}:A\to P$, $p\in P$ (see section \ref{section 5}).
\par In section \ref{section 2}, using the notion of partially crossed topological $G-R$-bimodule $(A,\mu)$ we define $Der_{c}(G,(A,\mu))$ as in \cite{Sah2} and the first non-abelian cohomology, $H^{1}(G,(A,\mu))$, of $G$  with coefficients in $(A,\mu)$ as a quotient of  $Der_{c}(G,(A,\mu))$ (see Definition \ref{Definition 1.10.}). Every topological $G$-module $A$ realises  a (partially) crossed topological $G-A/Z(A)$-bimodule $(A,\pi_{A})$. So, we may define the first cohomology, $\bar H^{1}(G,A)$, of $G$ with coefficients in $A$ as follows:
 $$\bar H^{1}(G,A)=H^{1}(G,(A,\pi_{A}))$$
 In general this  definition of the first non-abelian cohomology differs from the first pointed set cohomology, $H^{1}(G,A)$, which is defined in \cite{Sah1}. We show that there is a natural injection $\zeta:H^{1}(G,(A,\mu))\to H^{1}(G,A)$. Thus, there is an injection $\bar H^{1}(G,A)\to H^{1}(G,A)$. As a result, we may identify $\bar H^{1}(G,A)$ with $H^{1}(G,A)$, whenever $H^{1}(G,A/Z(A))$ is null. In particular, if $A$ is abelian, then $\bar H^{1}(G,A)\cong H^{1}(G,A)$.
\par In section \ref{section 3}, we introduce a new concept called  \emph{cocompatible triple} to study the change of groups. Using the cocompatible triple we define the inflation and the restriction maps and will show that for any normal subgroup $N$ of $G$, there is an exact sequence
$$\xymatrix{1\ar[r]&H^{1}(G/N,(A^{N},\mu^{N}))\ar[r]^{\ \ \ \ \  Inf^{1}}&H^{1}(G,(A,\mu))\ar[r]^{Res^{1} \ \ \ \ }& H^{1}(N,(A,\mu))^{G/N}}$$
In section \ref{section 4}, we show that for every proper extension, $1\to (A,\mathbf{1})\stackrel{\iota}\to (B,\mu)\stackrel{\pi}\to (C,\lambda)\to 1$, with continuous sections there exists a seven-term exact sequence,
\begin{center}
$0\to H^{0}(G,A)\stackrel{\iota^{0}}\to H^{0}(G,B)\stackrel{\pi^{0}}\to H^{0}(G,C)\stackrel{\delta^{0}}\to H^{1}(G,A)\stackrel{\iota^{1}}\to $
\end{center}
 $\ \ \ \ \ \ \ \ H^{1}(G,(B,\mu))  \stackrel{\pi^{1}}\to H^{1}(G,(C,\lambda))\stackrel{\delta^{1}}\to H^{2}(G,A).$
\par In section \ref{section 5}, we define  a principle homogeneous space (or topological torsor) over a partially crossed topological $G-R$-bimodule $(A,\mu)$ and we show that the first non-abelian cohomology of $G$ with coefficients in $(A,\mu)$  classifies the set of all  principle homogeneous spaces over $(A,\mu)$. We denote by $\mathcal{P}(A,\mu)$ the set of all classes of principle homogeneous spaces over $(A,\mu)$. Thus, naturally there exists a group product on $\mathcal{P}(A,\mu)$, whenever $(A,\mu)$ satisfying conditions of Theorem \ref{Theorem 1.23.}. As a result, if $(A,\mu)$ is a topological crossed $G$-module, then  $\mathcal{P}(A,\mu)$ is a group (not necessarily abelian).
\section{ Partially Crossed Topological Bimodules and the First Non-abelian Cohomology.}\label{section 2}
In this section, we introduce a partially crossed topological $G-R$-bimodule $(A,\mu)$ to define the first non-abelian cohomology, $H^{1}(G,(A,\mu))$, of $G$ with coefficients in $(A,\mu)$.
\begin{defn}\label{Definition 1.1.} A  precrossed topological $R$-module $(A,\mu)$ consists of a topological $R$-module $A$ and a continuous homomorphism $\mu:A\rightarrow R$ such that
\begin{center}
$\mu(^{r}a)=\ ^r\mu(a)$, $r\in R, a\in A.$
\end{center}
 If in addition we have
\begin{center}
$^{\mu(a)}b=\ ^{a}b$,  $a, b\in A$,
\end{center}
 then $(A,\mu)$ is called a crossed topological $R$-module.
 \end{defn}
\begin{defn}\label{Definition 1.2.} A precrossed topological $R$-module $(A,\mu)$ is said to be a partially crossed topological $R$-module, whenever it satisfies the following equality
$$^{\mu(a)}b=\ ^{a}b,$$ for all $b\in A$ and for all $a\in A$ such that $\mu(a)\in [R,R]$.
\end{defn}
 It is clear that every crossed topological $R$-module is  a partially crossed topological $R$-module.
\begin{defn}\label{Definition 1.4.} Let $G$, $R$ and $A$ be topological groups. Precrossed topological $R$-module $(A,\mu)$ is said to be  a precrossed topological $G-R$-bimodule, whenever
\begin{itemize}
\item[(1)] $G$ continuously acts on $R$ and $A$;
\item[(2)] $\mu:A\rightarrow R$ is a $G$-homomorphism;
\item[(3)] $^{(^gr)}a=\ ^{grg^{-1}}a$ for all $g\in G$, $r\in R$ and $a\in A$.
\end{itemize}
\end{defn}
\begin{defn}\label{Definition 1.5.} A precrossed topological $G-R$-bimodule $(A,\mu)$ is said to be a (partially) crossed topological $G-R$-bimodule, if  $(A,\mu)$ is a (partially) crossed topological $R$-module.
\end{defn}
\begin{exa}\label{Example 1.6.} (1) Let $A$ be an arbitrary topological G-module. Obviously, $Z(A)$ is a topological $G$-module. Now, we define an action of $R=A/Z(A)$ on $A$ and an
action of $G$ on $R$ by:
\begin{equation}\label{2.1}
 ^{aZ(A)}b=\ ^{a}b, \forall a,b\in A, \ \ \ \ ^{g}(aZ(A))=\ ^{g}aZ(A), \forall g\in G, a\in A.
\end{equation}
Let $\pi_{A}:A\rightarrow R$  be the canonical homomorphism. It is easy to see that under $(\ref{2.1})$ the pair
$(A,\pi_{A})$ is a  crossed topological $G-R$-bimodule.
\begin{itemize}
  \item[(2)] By part (1), for any topological group $G$ the pair $(G,\pi_{G})$ is a crossed topological $G-G/Z(G)$-bimodule.
\end{itemize}
\end{exa}
\begin{noti}\label{ Notice 1.7.} It is obvious that any precrossed (crossed or partially crossed) topological $R$-module is naturally viewed as a precrossed (crossed or partially crossed) topological $R-R$-bimodule.
\end{noti}
\begin{defn}\label{Definition 1.8.} A morphism $f:(A,\mu)\rightarrow (B,\nu)$ of precrossed (crossed) topological $G-R$-bimodule is a continuous homomorphism of topological groups $f:A\rightarrow B$ such that
\begin{itemize}
\item[(1)] $f(^{r}a)=\ ^{r}f(a)$, $r\in R$, $a\in A$;
\item[(2)] $f(^{g}a)=\ ^{g}f(a)$, $g\in G$, $a\in A$;
\item[(3)] $\mu=\nu\circ f$.
\end{itemize}
\end{defn}
\begin{defn}\label{Definition 1.9.} Let $(A,\mu)$ be a partially crossed topological $G-R$-bimodule. Denote by $Der(G,(A,\mu))$ the set of all pairs $(\alpha,r)$ where $\alpha$ is a crossed homomorphism from $G$ into $A$, i.e., $\alpha(gh)=\alpha(g)^{g}\alpha(h), \forall g, h\in G$; and $r$ is an element of $R$ such that $$\mu\circ \alpha(g)=\ r^{g}r^{-1}, \forall g\in G.$$
Let $Der_{c}(G,(A,\mu))$ be a subset of $Der(G,(A,\mu))$ which is defined as follows:
\begin{center}
 $Der_{c}(G,(A,\mu))=\{(\alpha,r)\in Der(G,(A,\mu))| \alpha$ is continuous$\}.$
\end{center}
 \end{defn}
 There is a group product $\star$ in $Der(G,(A,\mu))$  by $(\alpha,r)\star(\beta,s)=(\alpha*\beta,rs)$, where $\alpha*\beta(g)=\ ^{r}\beta(g)\alpha(g), \forall g\in G$ (see  \cite[Proposition 1.5] {Ina-h}). It is easy to see that $Der_{c}(G,(A,\mu))$ is a subgroup of $Der(G,(A,\mu))$.
\\
\par Let $R$ be a topological $G$-module, then we define $$H^{0}(G,R)=\{r|^{g}r=r,\forall g\in G\}.$$
\par Let $(A,\mu)$ be a partially crossed topological $G-R$-bimodule. H. Inassaridze \cite{Ina-h} introduced the equivalence relation $\sim$  on the group $Der(G,(A,\mu))$ as
$(\alpha,r)\sim(\beta,s)$ whenever
\begin{equation}\label{2.2}
\exists \ a\in A \wedge ( \forall g\in G\Rightarrow\beta(g)=a^{-1}\alpha(g)^{g}a
\end{equation}  and
\begin{equation}\label{2.3}
 s=\mu(a)^{-1}r\  mod\  H^{0}(G,R)
 \end{equation}
\par Let $\sim'$ be the restriction of $\sim$ to $Der_{c}(G,(A,\mu))$. Therefore, $\sim'$ is an equivalence relation. In other word, $(\alpha,r)\sim'(\beta,s)$ if and only if $(\alpha,r)\sim(\beta,s)$, whenever $(\alpha,r), (\beta,s)\in Der_{c}(G,(A,\mu))$.
\\ \par Let $A$ be a  topological $G$-module. We denote by $Inn(G,A)$ the set of all inner crossed homomorphisms from $G$ into $A$, in other words: $$Inn(G,A)=\{Inn(a)|a\in A, Inn(a)(g)=a^{g}a^{-1}\}$$
Similarly, if $(A,\mu)$ is a partially crossed topological $G-R$-bimodule, then put
$$Inn(G,(A,\mu))=\{(Inn(a),\mu(a)z)| a\in A, z\in H^{0}(G,R)\}.$$
\begin{defn}\label{Definition 1.10.} Let $(A,\mu)$ be a partially crossed topological $G-R$-bimodule. The quotient set  $Der_{c}(G,(A,\mu))/\sim'$  will be called the first cohomology  of $G$ with the coefficients in $(A,\mu)$ and is denoted by $H^{1}(G,(A,\mu))$.
\end{defn}
\begin{fac}\label{ Fact 1.10.} Let $(A,\mu)$ is a partially crossed topological $G-R$-bimodule. Then
 \begin{itemize}
   \item[(1)] $^{zg}a=\ ^{gz}a$,  $\forall a\in A$, $g\in G$, $z\in H^{0}(G,R)$.
   \item[(2)] $\alpha(g)^{gr}a=\ ^{rg}a\alpha(g)$, $\forall g\in G, a\in A, (\alpha,r)\in Der_{c}(G,(A,\mu))$.
 \end{itemize}
 \end{fac}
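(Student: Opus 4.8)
The plan is to derive both identities from axiom~(3) of Definition~\ref{Definition 1.4.}, that is, from ${}^{(^{g}r)}b={}^{grg^{-1}}b$ for all $g\in G$, $r\in R$, $b\in A$. By the action conventions of the introduction this unwinds to ${}^{(^{g}r)}b={}^{g}\big({}^{r}({}^{g^{-1}}b)\big)$, and, substituting ${}^{g}b$ for $b$ and using that each ${}^{g}$ is an automorphism of $A$, it is equivalent to the identity ${}^{g}\big({}^{r}b\big)={}^{(^{g}r)}\big({}^{g}b\big)$ for all $b\in A$, $g\in G$, $r\in R$. For part~(1) I would simply apply this with $r=z$: since $z\in H^{0}(G,R)$ we have ${}^{g}z=z$, so ${}^{g}({}^{z}b)={}^{(^{g}z)}({}^{g}b)={}^{z}({}^{g}b)$ for every $b\in A$, and reading the two ends as ${}^{gz}b$ and ${}^{zg}b$ gives the claim. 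This step is routine.

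For part~(2), fix $(\alpha,r)\in Der_{c}(G,(A,\mu))$, $g\in G$ and $a\in A$, and put $s=\mu(\alpha(g))=r\,{}^{g}r^{-1}$, so that $r=s\cdot{}^{g}r$. First I would prove the ``$R$-level'' identity ${}^{rg}a={}^{s}\big({}^{gr}a\big)$: from $r=s\cdot{}^{g}r$ we get ${}^{r}({}^{g}a)={}^{s}\big({}^{(^{g}r)}({}^{g}a)\big)$, and ${}^{(^{g}r)}({}^{g}a)={}^{g}({}^{r}a)$ by the displayed identity above, whence ${}^{rg}a={}^{s}({}^{gr}a)$. The decisive step is then to replace the $R$-action of $s=\mu(\alpha(g))$ on $A$ by conjugation by $\alpha(g)$, namely ${}^{\mu(\alpha(g))}b={}^{\alpha(g)}b=\alpha(g)\,b\,\alpha(g)^{-1}$; this is where the (partially) crossed hypothesis on $(A,\mu)$ is used, applied to the element $\alpha(g)\in A$. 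Granting it, ${}^{rg}a=\alpha(g)\,({}^{gr}a)\,\alpha(g)^{-1}$, and multiplying on the right by $\alpha(g)$ gives $\alpha(g)\,{}^{gr}a={}^{rg}a\,\alpha(g)$, as required.

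The step I expect to be the main obstacle is the justification, in part~(2), of the crossed relation for $\alpha(g)$: since $(A,\mu)$ is only \emph{partially} crossed, the identity ${}^{\mu(a')}b={}^{a'}b$ is a priori available only when $\mu(a')\in[R,R]$, so one has to verify that $s=\mu(\alpha(g))=r\,{}^{g}r^{-1}$ lies in $[R,R]$, or else argue directly that this instance of the crossed identity is available in the situation at hand; for a genuine crossed topological $G-R$-bimodule this is automatic and the computation above goes through unchanged. Continuity plays no role, as both statements are algebraic identities among elements.
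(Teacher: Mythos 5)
Your part (1) is correct, and your reduction in part (2) --- writing $r=\mu(\alpha(g))\cdot{}^{g}r$ and using axiom (3) of Definition~\ref{Definition 1.4.} to get ${}^{rg}a={}^{\mu(\alpha(g))}\bigl({}^{gr}a\bigr)$, so that the claim becomes the Peiffer identity ${}^{\mu(\alpha(g))}b={}^{\alpha(g)}b$ at the elements $\alpha(g)$ --- is exactly the right skeleton. Be aware, however, that the paper offers no argument to compare with: its ``proof'' of this Fact is a citation to \cite{Ina-h} and \cite{Ina-n}, where the corresponding identities are established in the coefficient situations treated there.

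The step you flagged is a genuine gap, and it cannot be closed from Definition~\ref{Definition 1.2.} as literally stated. Identity (2) is \emph{equivalent} to the Peiffer identity at $\alpha(g)$, and $\mu(\alpha(g))=r\,{}^{g}r^{-1}$ is a commutator of $r$ with $g$ in the semidirect product $R\rtimes G$, not in $R$; when the $G$-action on $R$ is not inner there is no reason for it to lie in $[R,R]$. Concretely, take $A=Q_{8}$, $R=A/Z(A)=\{\bar1,\bar i,\bar j,\bar k\}$, $\mu=\pi_{A}$, let $\bar i$ and $\bar j$ both act on $A$ as conjugation by $k$ and let $\bar k$ act trivially, and let $G=\mathbb{Z}/2=\langle\sigma\rangle$ act on $A$ by the automorphism interchanging $i$ and $j$ (so $k\mapsto -k$), with the induced action on $R$. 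All axioms of a partially crossed $G-R$-bimodule hold (here $[R,R]=1$, so the Peiffer condition is only required on $Z(A)$, where it is vacuous), and $(\alpha,\bar i)$ with $\alpha(\sigma)=k$ lies in $Der_{c}(G,(A,\mu))$ since $\mu(k)=\bar k=\bar i\,{}^{\sigma}\bar i^{-1}$; yet $\alpha(\sigma)\,{}^{\sigma\bar i}i=k\cdot(-j)=i$ while ${}^{\bar i\sigma}i\,\alpha(\sigma)=(-j)\cdot k=-i$, so (2) fails. Thus your argument is complete for crossed bimodules, and also for partially crossed ones whenever the elements $r\,{}^{g}r^{-1}$ fall under the Peiffer hypothesis --- e.g.\ when $G$ acts on $R$ through conjugation by elements of $R$, in particular in the $R$-$R$-bimodule situation, where $r\,{}^{g}r^{-1}=[r,g]\in[R,R]$; this is the setting in which the cited results of Inassaridze apply. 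The obstacle you identified is therefore not an omission in your reasoning but a mismatch between the Fact as stated for general partially crossed $G-R$-bimodules and Definition~\ref{Definition 1.2.}.
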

\begin{proof} See \cite[p. 499]{Ina-h} and \cite[p. 315]{Ina-n}.
\end{proof}
\begin{defn}\label{Definition 1.11.} Let $A$ be a topological $G$-module. Denote by $\bar H^{1}(G,A)$ the first cohomology of $G$ with coefficients in $A$ and define it as follows:
 $$\bar H^{1}(G,A)=H^{1}(G,(A,\pi_{A})).$$
 \end{defn}
Let $(A,\mu)$ be a partially crossed topological $G-R$-bimodule, then there is the following natural map
\begin{center}
$\zeta:H^{1}(G,(A,\mu))\to H^{1}(G,A)$
$$[(\alpha,r)]\mapsto [\alpha].$$
\end{center}
\begin{thm}\label{Theorem 1.12.} Let $(A,\mu)$ be a partially crossed topological $G-R$-bimodule. Then
 \begin{itemize}
   \item[(i)] $H^{1}(G,(A,\mu))$ can be naturally embedded in $H^{1}(G,A)$.
   \item[(ii)] There is a bijection between $H^{1}(G,(A,\mu))$ and $H^{1}(G,A)$ if and only if the induced map $\mu^{1}:H^{1}(G,A)\to H^{1}(G,R)$ is trivial (that is, $\mu^{1}=\mathbf{1}$).
 \end{itemize}
 \end{thm}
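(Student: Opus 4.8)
The plan is to unwind the two definitions and track the map $\zeta$ explicitly, splitting the argument into the injectivity claim (i) and the bijectivity criterion (ii). First I would establish (i): the map $\zeta$ is well-defined because the equivalence relation $\sim'$ on $Der_c(G,(A,\mu))$ refines the equivalence relation defining $H^1(G,A)$ — condition (\ref{2.2}) is exactly the condition that $\alpha$ and $\beta$ are cohomologous crossed homomorphisms into $A$, while condition (\ref{2.3}) is an extra constraint on the $R$-components. For injectivity, suppose $\zeta[(\alpha,r)] = \zeta[(\beta,s)]$, so there exists $a\in A$ with $\beta(g) = a^{-1}\alpha(g)\,{}^{g}a$ for all $g\in G$. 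I must produce such an $a$ (or adjust it) so that additionally $s = \mu(a)^{-1}r \bmod H^{0}(G,R)$. The point is that applying $\mu$ to the relation $\beta(g) = a^{-1}\alpha(g)\,{}^{g}a$ and using that $(\alpha,r),(\beta,s)\in Der(G,(A,\mu))$ forces $s\,{}^{g}s^{-1} = \mu(a)^{-1}\,r\,{}^{g}r^{-1}\,{}^{g}\mu(a)$, whence $\mu(a)^{-1}r$ and $s$ differ by an element of $H^{0}(G,R)$ — so the $R$-component condition is automatic. Hence $(\alpha,r)\sim'(\beta,s)$ and $\zeta$ is injective.

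For (ii), the image of $\zeta$ consists of those classes $[\alpha]\in H^1(G,A)$ that admit a lift, i.e.\ for which there exists $r\in R$ with $\mu\circ\alpha(g) = r\,{}^{g}r^{-1}$ for all $g\in G$. Now $\mu\circ\alpha$ is a crossed homomorphism $G\to R$, and $\mu^{1}[\alpha] = [\mu\circ\alpha]\in H^1(G,R)$; the existence of such an $r$ is precisely the statement that $[\mu\circ\alpha]$ is the trivial (base-point) class in $H^1(G,R)$, i.e.\ that $\mu\circ\alpha$ is a principal/inner crossed homomorphism. Therefore $\zeta$ is onto $H^1(G,A)$ if and only if every class in the image of $\mu^{1}$ is trivial, i.e.\ $\mu^{1} = \mathbf{1}$. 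Combined with the injectivity from (i), this gives the claimed bijection criterion. I would also need to check the continuity bookkeeping: the element $r$ produced is just a point of $R$, so there is no topological subtlety there, and the lift $\alpha$ itself is continuous by hypothesis.

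The main obstacle I anticipate is the careful verification in (i) that the $R$-component condition (\ref{2.3}) is genuinely forced rather than needing a separate adjustment of $a$ — this requires using the cocycle identities for both $(\alpha,r)$ and $(\beta,s)$ together with the hypothesis $\mu(^{g}a) = {}^{g}\mu(a)$ (the $G$-homomorphism property from Definition \ref{Definition 1.4.}(2)) and the compatibility $^{(^{g}r)}a = {}^{grg^{-1}}a$, and then reading off that the resulting discrepancy lies in $H^{0}(G,R)$. A secondary point to handle cleanly is the phrase ``$\mu^{1}$ is trivial'': since $H^1(G,R)$ is only a pointed set in general, ``trivial'' must be interpreted as ``constant at the base point,'' and I would state this explicitly so that the iff in (ii) is unambiguous. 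Everything else — well-definedness of $\zeta$, functoriality of $\mu^{1}$ on $H^1$ — is routine diagram-chasing with crossed homomorphisms.
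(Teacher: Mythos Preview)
Your proposal is correct and follows essentially the same approach as the paper: for (i) you apply $\mu$ to the cohomology relation and deduce that $r^{-1}\mu(a)s\in H^{0}(G,R)$, and for (ii) you identify the image of $\zeta$ with the kernel of $\mu^{1}$ in the pointed-set sense. One minor remark: the compatibility ${}^{({}^{g}r)}a={}^{grg^{-1}}a$ from Definition~\ref{Definition 1.4.}(3) that you flag as needed in (i) is not actually used---only the $G$-equivariance of $\mu$ (Definition~\ref{Definition 1.4.}(2)) is required to pass from $\beta(g)=a^{-1}\alpha(g){}^{g}a$ to $s{}^{g}s^{-1}=\mu(a)^{-1}r{}^{g}r^{-1}\,{}^{g}\mu(a)$.
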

\begin{proof} Let $\zeta$ be the natural map mentioned above. (i). Suppose that $\zeta[(\alpha,r)])=\zeta([(\beta,s)])$. Hence there is $a\in A$ such that $\beta(g)=a^{-1}\alpha(g)^{g}a$, for all $g\in G$. Hence, $\mu\beta(g)=\mu(a)^{-1}\mu\alpha(g)^{g}\mu(a)$. Thus, $s^{g}s^{-1}=\mu(a)^{-1}r^{g}{r}^{-1}\ ^{g}\mu(a)$, for all $g\in G$. Therefore for all $g\in G$,  $^{g}(r^{-1}\mu(a)s)=r^{-1}\mu(a)s$, i.e., $r^{-1}\mu(a)s\in H^{0}(G,R)$. This shows that $\zeta$ is one to one.
\par (ii).  Let $\alpha\in Der_{c}(G,A)$, then $\mu\alpha\in Der_{c}(G,R)$. If $\mu^{1}=\mathbf{1}$, then there is an $r\in R$ such that for all $g\in G$, $\mu\alpha(g)=r^{g}r^{-1}$.  Thus $\zeta([(\alpha,r)])=[\alpha]$. So, $\zeta$ is onto. Conversely, if $\zeta$ is onto. Let $\alpha\in Der_{c}(G,A)$, then  there is $(\beta,r)\in Der_{c}(G,(A,\mu))$ such that $\zeta([(\beta,r)])=[\alpha]$. Thus, there is $a\in A$ such that $\alpha(g)=a^{-1}\beta(g)^{g}a$, whence $\mu^{1}([\alpha])=\mu^{1}([\beta])=[\mu\beta]=[Inn(r)]=1$, and this finishes the proof.
\end{proof}
\begin{cor}\label{Corollary 1.13.} Let $(A,\mu)$ be a partially crossed topological $G-R$-bimodule and $H^{1}(G,R)=0$. Then, there is a bijection between $H^{1}(G,A)$ and $H^{1}(G,(A,\mu))$.
\end{cor}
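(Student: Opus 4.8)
The plan is to derive Corollary \ref{Corollary 1.13.} directly from Theorem \ref{Theorem 1.12.}(ii). Since $(A,\mu)$ is a partially crossed topological $G-R$-bimodule, part (ii) of that theorem asserts that there is a bijection between $H^{1}(G,(A,\mu))$ and $H^{1}(G,A)$ precisely when the induced map $\mu^{1}:H^{1}(G,A)\to H^{1}(G,R)$ is trivial. So the only thing to check is that the hypothesis $H^{1}(G,R)=0$ forces $\mu^{1}=\mathbf{1}$.

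This last step is essentially immediate: if $H^{1}(G,R)=0$, then $H^{1}(G,R)$ is the trivial (pointed) set, so every map into it — in particular $\mu^{1}$ — is the trivial map $\mathbf{1}$. I would spell this out by recalling that $\mu^{1}$ is induced by post-composition with $\mu$, sending the class $[\alpha]$ of a continuous crossed homomorphism $\alpha:G\to A$ to $[\mu\circ\alpha]$; since the target $H^{1}(G,R)$ has only one element, $[\mu\circ\alpha]$ is necessarily that element, which is the distinguished/trivial class. Hence $\mu^{1}=\mathbf{1}$.

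Applying Theorem \ref{Theorem 1.12.}(ii) with this verified hypothesis then yields the desired bijection between $H^{1}(G,A)$ and $H^{1}(G,(A,\mu))$, completing the proof. I do not anticipate any genuine obstacle here; the corollary is a routine specialization, and the only mild point of care is to phrase the triviality of a map into a one-point pointed set correctly (and to note that "$H^{1}(G,R)=0$" is being used as shorthand for "$H^{1}(G,R)$ is a singleton consisting of the distinguished element"), so that the equality $\mu^{1}=\mathbf{1}$ is meaningful in the pointed-set sense used in Theorem \ref{Theorem 1.12.}.
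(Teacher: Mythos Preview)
Your proposal is correct and matches the paper's approach: the paper simply records that the proof is immediate, and what you have written is precisely the obvious unpacking of that word, namely that $H^{1}(G,R)=0$ forces $\mu^{1}=\mathbf{1}$ and then Theorem \ref{Theorem 1.12.}(ii) applies.
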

\begin{proof} The proof is immediate.
\end{proof}
Theorem \ref{Theorem 1.12.} implies immediately the following two corollaries:
\begin{cor}\label{Corollary 1.14.} Let $A$ be a topological $G$-module. There is a bijection between  $\bar H^{1}(G,A)$  and $H^{1}(G,A)$ if and only if $\pi^{1}_{A}:H^{1}(G,A)\to H^{1}(G,A/Z(A))$ is null.
\end{cor}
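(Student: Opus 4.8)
The plan is to deduce Corollary~\ref{Corollary 1.14.} directly from Theorem~\ref{Theorem 1.12.} by specializing the partially crossed topological $G-R$-bimodule to the canonical one attached to a $G$-module $A$. Recall from Example~\ref{Example 1.6.}(1) that $(A,\pi_{A})$ is a crossed topological $G-A/Z(A)$-bimodule, hence in particular a partially crossed topological $G-R$-bimodule with $R=A/Z(A)$ and $\mu=\pi_{A}$. By Definition~\ref{Definition 1.11.} we have $\bar H^{1}(G,A)=H^{1}(G,(A,\pi_{A}))$, so the content of the corollary is simply Theorem~\ref{Theorem 1.12.}(ii) rewritten in this special case.

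First I would invoke Theorem~\ref{Theorem 1.12.}(ii) with $(A,\mu)=(A,\pi_{A})$ and $R=A/Z(A)$: it asserts that there is a bijection between $H^{1}(G,(A,\pi_{A}))$ and $H^{1}(G,A)$ if and only if the induced map $\pi_{A}^{1}:H^{1}(G,A)\to H^{1}(G,A/Z(A))$ is trivial. Then I would substitute $\bar H^{1}(G,A)$ for $H^{1}(G,(A,\pi_{A}))$ using Definition~\ref{Definition 1.11.}, and observe that ``$\pi_{A}^{1}=\mathbf 1$'' is precisely the statement that $\pi_{A}^{1}$ is null (the target being a pointed set, triviality of the map means its image is the distinguished point). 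This yields the claimed equivalence verbatim.

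The one point meriting a sentence of justification is that $(A,\pi_{A})$ genuinely falls under the hypotheses of Theorem~\ref{Theorem 1.12.}, i.e.\ that it is a \emph{partially} crossed topological $G-R$-bimodule; but this is immediate since Example~\ref{Example 1.6.}(1) shows it is even a crossed topological $G-R$-bimodule, and by the remark following Definition~\ref{Definition 1.2.} (together with Definition~\ref{Definition 1.5.}) every crossed topological $G-R$-bimodule is partially crossed. There is essentially no obstacle here: the corollary is a direct translation, and the only care needed is to match the notation $\bar H^{1}(G,A)$, $\pi_{A}$, ``null'' with $H^{1}(G,(A,\mu))$, $\mu$, ``trivial'' in the theorem. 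I would therefore present the proof as: apply Theorem~\ref{Theorem 1.12.}(ii) to the crossed topological $G-A/Z(A)$-bimodule $(A,\pi_{A})$ of Example~\ref{Example 1.6.}(1), and use Definition~\ref{Definition 1.11.}.
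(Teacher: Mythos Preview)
Your proposal is correct and is exactly the approach the paper takes: the authors state that Corollary~\ref{Corollary 1.14.} follows immediately from Theorem~\ref{Theorem 1.12.}, and your specialization $(A,\mu)=(A,\pi_{A})$, $R=A/Z(A)$ via Example~\ref{Example 1.6.}(1) and Definition~\ref{Definition 1.11.} is precisely that immediate deduction.
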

\begin{cor}\label{Corollary 1.15.} If $A$ is a topological $G$-module and $H^{1}(G,A/Z(A))=0$, then there is a bijection between  $\bar H^{1}(G,A)$  and $H^{1}(G,A)$. In particular if $A$ is abelian, then $\bar H^{1}(G,A)\cong H^{1}(G,A)$.
\end{cor}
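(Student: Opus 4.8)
The plan is to obtain both assertions from Corollary~\ref{Corollary 1.14.} (equivalently, from Theorem~\ref{Theorem 1.12.}(ii)); essentially no new computation is needed for the first statement. Indeed, if $H^{1}(G,A/Z(A))=0$, then this pointed set has a single element, so \emph{every} map with target $H^{1}(G,A/Z(A))$ is null; in particular the induced map $\pi^{1}_{A}\colon H^{1}(G,A)\to H^{1}(G,A/Z(A))$ is null. By Corollary~\ref{Corollary 1.14.} this is exactly the condition ensuring a bijection between $\bar H^{1}(G,A)$ and $H^{1}(G,A)$, which proves the first claim.

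For the ``in particular'' clause, suppose $A$ is abelian. Then $Z(A)=A$, so $R:=A/Z(A)$ is the trivial group and $H^{1}(G,A/Z(A))=H^{1}(G,\{1\})=0$; hence the first part already yields a bijection $\bar H^{1}(G,A)\leftrightarrow H^{1}(G,A)$, realised by the natural map $\zeta$ of Theorem~\ref{Theorem 1.12.} (which is onto here, since $\mu^{1}=\mathbf{1}$). It remains to promote this to a group isomorphism, and for that I would unwind the definitions in the degenerate case $R=\{1\}$, $\mu=\pi_{A}=\mathbf{1}$. An element of $Der_{c}(G,(A,\pi_{A}))$ is then just a pair $(\alpha,1)$ with $\alpha$ a continuous crossed homomorphism, the compatibility condition of Definition~\ref{Definition 1.9.} being automatic; the product degenerates to $(\alpha,1)\star(\beta,1)=(\alpha*\beta,1)$ with $\alpha*\beta(g)={}^{1}\beta(g)\alpha(g)=\beta(g)\alpha(g)=\alpha(g)\beta(g)$, i.e. the classical pointwise product of $1$-cocycles (using that $A$ is abelian); and, since condition~$(\ref{2.3})$ holds trivially, $(\alpha,1)\sim'(\beta,1)$ reduces by~$(\ref{2.2})$ to the existence of $a\in A$ with $\beta(g)=a^{-1}\alpha(g)\,{}^{g}a=\alpha(g)\,(a^{-1}\,{}^{g}a)$ for all $g$, that is, to $\alpha$ and $\beta$ differing by a principal crossed homomorphism. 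Thus $\bar H^{1}(G,A)=Der_{c}(G,(A,\pi_{A}))/\sim'$ is precisely the quotient of the group of continuous $1$-cocycles by the subgroup of coboundaries, namely $H^{1}(G,A)$ as defined in \cite{Sah1}, and $\zeta([(\alpha,1)])=[\alpha]$ is manifestly a homomorphism; being a bijection, it is an isomorphism.

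The only step that is not purely formal is this last one: checking that, in the abelian case, the bijection furnished by Corollary~\ref{Corollary 1.14.} respects the group structures rather than only the pointed-set structures. Concretely this amounts to the observations above, that the operation $\star$ and the equivalence $\sim'$ collapse, when $R=\{1\}$, to the cocycle product and coboundary relation defining the classical $H^{1}(G,A)$ of \cite{Sah1}; everything else follows immediately from the results already established.
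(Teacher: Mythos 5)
Your proposal is correct and follows essentially the same route as the paper, which states Corollary \ref{Corollary 1.15.} as an immediate consequence of Theorem \ref{Theorem 1.12.}(ii) (via Corollary \ref{Corollary 1.14.}). Your extra unwinding of the degenerate case $R=A/Z(A)=\{1\}$ to upgrade the bijection to a group isomorphism is a detail the paper leaves implicit (in the spirit of its remark before Theorem \ref{Theorem 1.27.}), and it is carried out correctly.
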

 According to part (i) of the proof of Theorem \ref{Theorem 1.12.}, we have the next remark.
\begin{rem}\label{Remark 1.16.}  Let $(A,\mu)$ be a partially crossed (topological) $G-R$-bimodule, and let $(\alpha,r),(\beta,s)\in Der(G,(A,\mu))$. Then
\begin{center}
$(\alpha,r)\sim (\beta,s) \Leftrightarrow \exists a\in A \wedge (\forall g\in G \Rightarrow \beta(g)=a^{-1}\alpha(g)^{g}a)$.
\end{center}
In other words, the condition (\ref{2.2}) implies the condition (\ref{2.3}). Thus, we can delete the condition (\ref{2.3}) of the equivalence relation $\sim$.
\end{rem}
\begin{defn}\label{Definition 1.17.} Let $(A,\mu)$ be a partially crossed topological $G-R$-bimodule. The quotient set  $Der_{c}(G,(A,\mu))/\sim'$  will be called the first cohomology  of $G$ with the coefficients in $(A,\mu)$ and is denoted by $H^{1}(G,(A,\mu))$.
\end{defn}
\begin{thm}\label{ Theorem 1.18.} Let $(A,\mu)$ be a  crossed topological $G-R$-bimodule. Then, $Inn(G,(A,\mu))$ is a subgroup of $Der_{c}(G,(A,\mu))$.
\end{thm}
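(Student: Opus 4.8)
The plan is to verify directly that $Inn(G,(A,\mu))$ is closed under the group product $\star$ and under inversion inside $Der_c(G,(A,\mu))$, using the defining formula $(\alpha,r)\star(\beta,s)=(\alpha*\beta,rs)$ with $\alpha*\beta(g)={}^r\beta(g)\alpha(g)$. First I would check that each pair $(Inn(a),\mu(a)z)$ with $a\in A$, $z\in H^0(G,R)$ actually lies in $Der_c(G,(A,\mu))$: the map $Inn(a)\colon g\mapsto a\,{}^g a^{-1}$ is a continuous crossed homomorphism because the action is continuous, and $\mu(Inn(a)(g))=\mu(a)\,{}^g\mu(a)^{-1}=(\mu(a)z)\,{}^g(\mu(a)z)^{-1}$ since $z$ is $G$-invariant; so the cocycle condition $\mu\circ\alpha(g)={}^{?}$ of Definition \ref{Definition 1.9.}, namely $\mu\alpha(g)=r\,{}^g r^{-1}$, holds with $r=\mu(a)z$. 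Hence $Inn(G,(A,\mu))\subseteq Der_c(G,(A,\mu))$, and it is nonempty (take $a=1$, $z=1$).

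Next I would compute the product of two inner elements. Take $(Inn(a),\mu(a)z)$ and $(Inn(b),\mu(b)w)$ with $a,b\in A$ and $z,w\in H^0(G,R)$. The second coordinate of the product is $\mu(a)z\mu(b)w$; I would like to rewrite this as $\mu(ab')z'$ for a suitable $b'\in A$ and $z'\in H^0(G,R)$. The first coordinate is the crossed homomorphism $g\mapsto {}^{\mu(a)z}\!\big(b\,{}^g b^{-1}\big)\,\big(a\,{}^g a^{-1}\big)$. Here the crossed-module axiom is used decisively: since $(A,\mu)$ is a crossed topological $R$-module, ${}^{\mu(a)}c={}^a c$ for all $c\in A$, and by Fact \ref{ Fact 1.10.}(1) the invariant element $z$ may be moved past $g$ in the exponent, so ${}^{\mu(a)z}({}^g(\cdot))={}^{\mu(a)}({}^{zg}(\cdot))={}^a({}^{gz}(\cdot))$, which lets me reorganize the expression. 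Carrying this out, the product first coordinate becomes $g\mapsto (ab'')\,{}^g(ab'')^{-1}=Inn(ab'')(g)$ for an explicit $b''$ obtained from $b$ by twisting the $R$- and $G$-actions; the bookkeeping of moving $z$ and $w$ through the conjugation actions, and matching $\mu(ab'')$ against $\mu(a)z\mu(b)w$ modulo $H^0(G,R)$, is the routine-but-delicate part. Inversion is handled the same way: from $(\alpha,r)\star(\beta,s)=(\mathbf{1},1)$ one reads off $(Inn(a),\mu(a)z)^{-1}=(Inn(a'),\mu(a')z')$ for an explicit $a'$ (morally $a'\sim {}^{\mu(a)^{-1}}a^{-1}$) and $z'=z^{-1}$, and one checks this pair is again of inner type.

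The main obstacle I anticipate is purely computational: correctly commuting the central (i.e., $G$-invariant) elements $z,w\in H^0(G,R)$ through the mixed $G$- and $R$-conjugation actions, and verifying that the resulting second coordinate agrees with $\mu$ of the new group element \emph{up to} $H^0(G,R)$, which is exactly the slack built into the definition of $Inn(G,(A,\mu))$. The crossed-module identity ${}^{\mu(a)}b={}^a b$ (not merely the partially-crossed version) is what makes the first coordinates collapse to genuine inner crossed homomorphisms, so it is essential that the hypothesis here is a \emph{crossed} topological $G$-$R$-bimodule rather than only a partially crossed one; I would flag that point explicitly. Once closure under $\star$ and under inverses is established, the subgroup criterion finishes the proof.
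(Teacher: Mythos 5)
Your plan is the same as the paper's: a direct check that $Inn(G,(A,\mu))$ is closed under the product $\star$ and under inversion, using the crossed-module identity ${}^{\mu(a)}b={}^{a}b$ and part (1) of Fact \ref{ Fact 1.10.} to slide the invariant element $z$ past the $G$-action; the paper simply carries out the bookkeeping you defer, getting $(Inn(a),\mu(a)z)\star(Inn(a'),\mu(a')z')=(Inn(a\,{}^{z}a'),\mu(a\,{}^{z}a')zz')$ and $(Inn(a),\mu(a)z)^{-1}=(Inn({}^{z^{-1}}a^{-1}),\mu({}^{z^{-1}}a^{-1})z^{-1})$, with the second coordinates matching exactly (the slack in $H^{0}(G,R)$ being absorbed by $zz'$ and $z^{-1}$). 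The only slip is your heuristic for the inverse: ${}^{\mu(a)^{-1}}a^{-1}$ collapses to $a^{-1}$, whereas the correct element is twisted by $z^{-1}$, namely ${}^{z^{-1}}a^{-1}$; since you propose to read the inverse off from $(\alpha,r)\star(\beta,s)=(\mathbf{1},1)$, that procedure would produce the right element anyway.
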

\begin{proof} Suppose that $(Inn(a),\mu(a)z)), (Inn(a'),\mu(a')z')\in Inn(G,(A,\mu))$. We get $(Inn(a),\mu(a)z))(Inn(a'),\mu(a')z')= (Inn(a^{z}a'),\mu(a^{z}a')zz')$, since for any $g\in G$
\begin{center}
$^{\mu(a)z}Inn(a')Inn(a)(g)=\ ^{\mu(a)z}Inn(a')(g)Inn(a)(g)$
$$=a^{z}Inn(a')(g)a^{-1}Inn(a)(g)
=a^{z}(a'^{g}a'^{-1})a^{-1}(a^{g}a^{-1})$$
$=a^{z}a'^{zg}{a'^{-1}}^{g}a^{-1}
=a^{z}a'^{gz}{a'^{-1}}^{g}a^{-1}
=a^{z}a'^{g}(^{z}a'^{-1}a^{-1})$
$$=a^{z}a'^{g}(a^{z}a')^{-1}=Inn(a^{z}a')(g),$$
\end{center}
and also, $\mu(a)z\mu(a')z'=\mu(a)z\mu(a')z^{-1}zz'=\mu(a)^{z}\mu(a')zz'=\mu(a^{z}a')zz'$.
Thus, $Inn(G,(A,\mu))$ is closed under multiplication.
\par In addition, We have $(Inn(a),\mu(a)z)^{-1}=(Inn(^{z^{-1}}a^{-1}),\mu(^{z^{-1}}a^{-1})z^{-1})$, since for any $g\in G$
\begin{center}
  $^{(\mu(a)z)^{-1}}(Inn(a)(g))^{-1}=\ ^{z^{-1}\mu(a^{-1})}(a^{g}a^{-1})^{-1}=\ ^{z^{-1}\mu(a^{-1})}(^{g}aa^{-1})=\ ^{z^{-1}}({a^{-1}}^{g}aa^{-1}a)=
  \ ^{z^{-1}}({a^{-1}}^{g}a)=\ ^{z^{-1}}{a^{-1}} ^{z^{-1}g}a=\ ^{z^{-1}}{a^{-1}}^{gz^{-1}}a= \ ^{z^{-1}}{a^{-1}}^{g}(^{z^{-1}}a)=Inn(^{z^{-1}}a^{-1})(g),$
\end{center}
and also, $(\mu(a)z)^{-1}=z^{-1}\mu(a^{-1})=\mu(^{z^{-1}}a^{-1})z^{-1}$.
Therefore, $Inn(G,(A,\mu))$ is closed under inversion. So, $Inn(G,(A,\mu))$ is a subgroup of $Der_{c}(G,(A,\mu))$.
\end{proof}
\begin{rem}\label{ Remark 1.19.} Note that $Inn(G,(A,\mu))$ is not necessarily a normal subgroup of $Der_{c}(G,(A,\mu))$. If $H^{1}(G,(A,\mu))$ is a group, then $Inn(G,(A,\mu))$ is a normal subgroup of $Der_{c}(G,(A,\mu))$ and $$H^{1}(G,(A,\mu))\cong Der_{c}(G,(A,\mu))/Inn(G,(A,\mu)).$$
\end{rem}
\begin{rem}\label{ Remark 1.20.} Let $(A,\mu)$ be a crossed topological $G-R$-bimodule. Then, $Inn(G,(A,\mu))$ is a normal subgroup of $Der_{c}(G,(A,\mu))$ if and only if $^{(^{r}z)}\alpha^{-1}\alpha\in Inn(G,A)$ for all $(\alpha,r)\in Der_{c}(G,(A,\mu)), z\in H^{0}(G,R)$.
\end{rem}
\begin{proof} Suppose that $(\alpha,r)\in Der_{c}(G,(A,\mu))$ and $(Inn(a),\mu(a)z)\in Inn(G,(A,\mu))$. We have
\begin{center}
  $(\alpha,r)(Inn(a),\mu(a)z)(\alpha,r)^{-1}=(^{r\mu(a)zr^{-1}}{\alpha^{-1}}^{r}Inn(a)\alpha,r\mu(a)zr^{-1})$
\end{center}
Thus, $$(^{r\mu(a)zr^{-1}}{\alpha^{-1}}^{r}Inn(a)\alpha)(g)=\ ^{\mu(^{r}a)(^{r}z)}{\alpha^{-1}(g)}^{r}Inn(a)(g)\alpha(g)$$
\begin{center}
$=\ ^{r}a^{(^{r}z)}{\alpha^{-1}(g)}^{rg}a^{-1}\alpha(g)=\ ^{r}a^{(^{r}z)}{\alpha^{-1}(g)}\alpha(g)^{gr}a^{-1}.$
\end{center}
The last equality is obtained by part (2) of  Fact \ref{ Fact 1.10.}. Hence, we have $^{r\mu(a)zr^{-1}}{\alpha^{-1}}^{r}Inn(a)\alpha\sim'\ ^{(^{r}z)}{\alpha^{-1}}\alpha $. This completes the proof.
 \end{proof}
As a consequence of Remark \ref{ Remark 1.20.} we get the following corollary.
\begin{cor}\label{ Corollary 1.21.} Let $(A,\mu)$ be a crossed topological $G-R$-bimodule and $A$ a trivial $R$-module. Then $Inn(G,(A,\mu))$ is a normal subgroup of $Der_{c}(G,(A,\mu))$.
\end{cor}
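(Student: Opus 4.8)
The plan is to apply the criterion from Remark \ref{ Remark 1.20.} directly: it suffices to show that for every $(\alpha,r)\in Der_{c}(G,(A,\mu))$ and every $z\in H^{0}(G,R)$, the crossed homomorphism $\,^{(^{r}z)}\alpha^{-1}\alpha\,$ lies in $Inn(G,A)$. The hypothesis is that $A$ is a trivial $R$-module, meaning $^{s}a=a$ for all $s\in R$, $a\in A$. So first I would observe that the outer action $^{(^{r}z)}(-)$ on $A$ is trivial, hence $^{(^{r}z)}\alpha^{-1}(g)=\alpha^{-1}(g)=\alpha(g)^{-1}$ for every $g\in G$. Therefore $^{(^{r}z)}\alpha^{-1}\alpha$ is the map $g\mapsto \alpha(g)^{-1}\alpha(g)=1$, i.e. the trivial crossed homomorphism $\mathbf{1}:G\to A$.

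Second, I would note that the trivial crossed homomorphism is inner: taking $a=1\in A$ gives $Inn(1)(g)=1\,^{g}1^{-1}=1$ for all $g$, so $\mathbf{1}=Inn(1)\in Inn(G,A)$. Hence the condition of Remark \ref{ Remark 1.20.} is satisfied trivially (for every $(\alpha,r)$ and every $z$), and we conclude that $Inn(G,(A,\mu))$ is a normal subgroup of $Der_{c}(G,(A,\mu))$.

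One point I should be slightly careful about is that Remark \ref{ Remark 1.20.} involves the combined action $^{(^{r}z)}(-)$; I want to make sure this really is the $R$-action (the outer action of $R$ on $A$) and not a composite $G$-action, since only the $R$-module structure of $A$ is assumed trivial. Reading the statement of Remark \ref{ Remark 1.20.} in context, $^{r}z$ is an element of $R$ (as $r\in R$ acts on $z\in H^{0}(G,R)\subseteq R$ by conjugation in $R$, or via the $G$-action if one prefers — in either reading it is an element of $R$), and $^{(^{r}z)}\alpha^{-1}$ denotes the $R$-action of that element on the values of $\alpha^{-1}$. So triviality of the $R$-module $A$ is exactly what kills it. I do not anticipate a genuine obstacle here; the only care needed is in unwinding the notation of Remark \ref{ Remark 1.20.} to confirm that the relevant action is the $R$-action on $A$, after which the result is immediate.
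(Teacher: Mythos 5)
Your proposal is correct and follows exactly the route the paper intends: the corollary is stated as an immediate consequence of Remark \ref{ Remark 1.20.}, and since the $R$-action on $A$ is trivial the map $^{(^{r}z)}\alpha^{-1}\alpha$ is the constant map $g\mapsto\alpha(g)^{-1}\alpha(g)=1=Inn(1)(g)$, so the normality criterion holds. Your reading of the notation is also right: $^{r}z\in R$ and the action on the values of $\alpha^{-1}$ is the $R$-action on $A$, which triviality kills.
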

Let $(A,\mu)$ be a partially crossed topological $G-R$-bimodule. On can see there is a natural action of $H^{0}(G,R)$ on $H^{1}(G,(A,\mu))$ as follows:
\begin{center}
$^{z}[(\alpha,r)]=[(^{z}\alpha,^{z}r)]$, $z\in H^{0}(G,R),[(\alpha,r)]\in H^{1}(G,(A,\mu))$,
\end{center}
where  $(^{z}\alpha)(g)=\ ^{z}\alpha(g)$, for all $g\in G$.  Note that by part (1) of Fact \ref{ Fact 1.10.}, $^{z}\alpha$ is a crossed homomorphism.
\begin{lem}\label{ Lemma 1.22.} Let $(A,\mu)$ be a partially crossed topological $G-R$-bimodule. If $Der(G,(A,\mu))/\sim$ is a group, then $H^{1}(G,(A,\mu))$ is isomorphic to a subgroup of $Der(G,(A,\mu))/\sim$.
\end{lem}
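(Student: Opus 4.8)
The plan is to exhibit $H^1(G,(A,\mu))$ as the image of a natural group homomorphism out of a subgroup of $Der_c(G,(A,\mu))$, and then use the hypothesis that $Der(G,(A,\mu))/\sim$ is a group to conclude that this image is itself a subgroup of $Der(G,(A,\mu))/\sim$. Concretely, I would consider the canonical map $q\colon Der_c(G,(A,\mu))\to Der(G,(A,\mu))/\sim$ obtained by composing the inclusion $Der_c(G,(A,\mu))\hookrightarrow Der(G,(A,\mu))$ (which is a group homomorphism for $\star$, as noted after Definition \ref{Definition 1.9.}) with the quotient map. Since $Der(G,(A,\mu))/\sim$ is assumed to be a group, $q$ is a genuine group homomorphism, so its image $q(Der_c(G,(A,\mu)))$ is a subgroup of $Der(G,(A,\mu))/\sim$.

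The key observation is then that $q$ factors through $\sim'$ and induces a \emph{bijection} from $H^1(G,(A,\mu)) = Der_c(G,(A,\mu))/\sim'$ onto this image. Indeed $\sim'$ is by definition the restriction of $\sim$ to $Der_c(G,(A,\mu))$, so two continuous cocycles have the same image under $q$ if and only if they are $\sim$-equivalent if and only if they are $\sim'$-equivalent; this simultaneously shows $q$ descends to a well-defined map on $H^1(G,(A,\mu))$ and that the descended map is injective. Surjectivity onto $q(Der_c(G,(A,\mu)))$ is automatic. Hence $H^1(G,(A,\mu))$ is in bijection with a subgroup of $Der(G,(A,\mu))/\sim$, and transporting the group structure along this bijection gives the desired isomorphism.

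The main point requiring care — and what I expect to be the only real obstacle — is checking that the induced bijection is in fact a group \emph{isomorphism}, i.e. that the transported operation on $H^1(G,(A,\mu))$ agrees with (or, more precisely, \emph{defines}) its group structure; a priori $H^1(G,(A,\mu))$ is merely a pointed set, so the statement should be read as: the set $H^1(G,(A,\mu))$ carries a group structure making the natural map an isomorphism onto a subgroup of $Der(G,(A,\mu))/\sim$. One must verify that $\sim'$-classes are exactly the fibers of $q$ — the nontrivial direction being that if $(\alpha,r),(\beta,s)\in Der_c(G,(A,\mu))$ satisfy $(\alpha,r)\sim(\beta,s)$ then the conjugating element $a\in A$ may be taken so that the relation holds in $Der_c$, which follows since the condition $\beta(g)=a^{-1}\alpha(g)\,^{g}a$ together with continuity of $\alpha$ and $\beta$ forces no new difficulty (by Remark \ref{Remark 1.16.} the second condition (\ref{2.3}) is automatic). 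One should also note that the distinguished element $[(\mathbf{1},1)]$ (or the class of any element of $Inn(G,(A,\mu))$) maps to the identity of $Der(G,(A,\mu))/\sim$, so the subgroup structure is compatible with base points. Once these routine verifications are in place, the isomorphism $H^1(G,(A,\mu))\cong q(Der_c(G,(A,\mu)))\leq Der(G,(A,\mu))/\sim$ is immediate.
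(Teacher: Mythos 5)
Your argument is correct and is essentially the paper's own proof: the paper likewise uses the natural map $\jmath\colon H^{1}(G,(A,\mu))\to Der(G,(A,\mu))/\sim$, $[(\alpha,r)]\mapsto cls(\alpha,r)$, whose injectivity comes from $\sim'$ being the restriction of $\sim$, and whose homomorphism property comes from $\sim$ (hence $\sim'$) being a congruence, so $H^{1}(G,(A,\mu))$ maps isomorphically onto a subgroup. Your extra care about where the group structure on $H^{1}(G,(A,\mu))$ comes from (transporting it along the bijection versus inducing it from $\star$ on $Der_{c}$) amounts to the same structure, since your map $q$ is induced by the group homomorphism $Der_{c}(G,(A,\mu))\hookrightarrow Der(G,(A,\mu))\to Der(G,(A,\mu))/\sim$.
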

\begin{proof} Clearly, the natural map $\jmath:H^{1}(G,(A,\mu))\to Der(G,(A,\mu))/\sim$, $[(\alpha,r)]\mapsto cls(\alpha,r)$ is injective. The equivalence relation $\sim'$ is  congruence, since by assumption, $\sim$ is  congruence. Thus, $\jmath$ is a homomorphism. This completes the proof.
\end{proof}
Indeed in Lemma \ref{ Lemma 1.22.}, $Der(G,(A,\mu))/\sim$ is the first non-abelian cohomology of $G$ with coefficients in $(A,\mu)$ \cite{Ina-h, Ina-c}.
\begin{thm}\label{Theorem 1.23.} Let $(A,\mu)$ be a partially crossed topological $G-R$-bimodule satisfying
the following conditions	
\begin{itemize}
  \item[(i)] $H^{0}(G,R)$ is a normal subgroup of $R$;
  \item[(ii)] for every $c\in H^{0}(G,R)$ and $(\alpha,r)\in Der_{c}(G,(A,\mu))$, there exists
$a \in Ker\mu$ and $^{c}\alpha(g)=a^{-1}\alpha(g)^{g}a$, $\forall g\in G$.
\end{itemize}
Then, $Der_{c}(G,(A,\mu))$ induces  a  group structure on $H^{1}(G,(A,\mu))$.
\end{thm}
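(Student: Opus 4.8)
The plan is to show that the group product $\star$ on $Der_c(G,(A,\mu))$ descends to a well-defined product on $H^1(G,(A,\mu)) = Der_c(G,(A,\mu))/\!\sim'$, which by Remark \ref{ Remark 1.19.} amounts to proving that $Inn(G,(A,\mu))$ is a normal subgroup of $Der_c(G,(A,\mu))$. By Theorem \ref{ Theorem 1.18.} we already know $Inn(G,(A,\mu))$ is a subgroup; by Remark \ref{ Remark 1.20.} normality is equivalent to showing that for every $(\alpha,r)\in Der_c(G,(A,\mu))$ and every $z\in H^0(G,R)$ we have $^{(^{r}z)}\alpha^{-1}\alpha\in Inn(G,A)$. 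So the whole proof reduces to verifying this single membership statement, and this is where hypotheses (i) and (ii) will be used.

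First I would record the consequence of hypothesis (i): since $H^0(G,R)$ is normal in $R$, for $z\in H^0(G,R)$ and $r\in R$ the element $^{r}z = rzr^{-1}$ again lies in $H^0(G,R)$. (This is needed just to make sense of applying hypothesis (ii) with the element $^{r}z$ in place of $c$.) Then I would apply hypothesis (ii) to the pair $(\alpha,r)\in Der_c(G,(A,\mu))$ with $c = {}^{r}z \in H^0(G,R)$: this yields an element $a\in Ker\,\mu$ with $^{(^{r}z)}\alpha(g) = a^{-1}\alpha(g)\,{}^{g}a$ for all $g\in G$. Rearranging, $\bigl({}^{(^{r}z)}\alpha^{-1}\alpha\bigr)(g) = {}^{(^{r}z)}\alpha(g)^{-1}\,\alpha(g)$; I would compute this is $\bigl(a^{-1}\alpha(g)\,{}^{g}a\bigr)^{-1}\alpha(g) = {}^{g}a^{-1}\,\alpha(g)^{-1}a\,\alpha(g)$. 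To identify this as an inner crossed homomorphism one needs it to have the form $b\,{}^{g}b^{-1}$ (equivalently $b^{-1}\,{}^{g}b$ up to replacing $b$); here the natural candidate is $b = a^{-1}$, giving $Inn(a^{-1})(g) = a^{-1}\,{}^{g}a$, so one checks $\bigl({}^{(^{r}z)}\alpha^{-1}\alpha\bigr)(g) = a^{-1}\,{}^{g}a$ directly by re-deriving it from the displayed relation in hypothesis (ii) rather than through the $\alpha(g)^{-1}$-conjugate form — i.e.\ from $^{(^{r}z)}\alpha(g) = a^{-1}\alpha(g)\,{}^{g}a$ one gets $\alpha(g)\,{}^{g}a = a\,{}^{(^{r}z)}\alpha(g)$, hence $\bigl(^{(^{r}z)}\alpha\bigr)^{-1}(g)\,\alpha(g)$ must be re-expressed carefully; the cleanest route is to observe the relation says precisely $\alpha \sim {}^{(^{r}z)}\alpha$ via $a$ as crossed homomorphisms into $A$, so $^{(^{r}z)}\alpha^{-1}\alpha$ differs from the trivial cocycle by $Inn(a)$ and hence lies in $Inn(G,A)$. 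Continuity of the resulting maps is inherited since $\alpha$ is continuous and the group operations and the $G$-action on $A$ are continuous.

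Having established $^{(^{r}z)}\alpha^{-1}\alpha\in Inn(G,A)$ for all such $(\alpha,r)$ and $z$, Remark \ref{ Remark 1.20.} gives that $Inn(G,(A,\mu))$ is normal in $Der_c(G,(A,\mu))$, and then Remark \ref{ Remark 1.19.} yields that $H^1(G,(A,\mu))$ is a group, namely the quotient $Der_c(G,(A,\mu))/Inn(G,(A,\mu))$, with the group structure induced by $\star$. It remains to note that $\sim'$ and the coset equivalence of $Inn(G,(A,\mu))$ coincide: this follows from Remark \ref{Remark 1.16.} (which drops condition (\ref{2.3})) together with the description of $Inn(G,(A,\mu))$, once one knows normality — so I would state this identification explicitly to close the argument.

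The main obstacle I anticipate is the bookkeeping in the middle step: correctly matching the relation $^{c}\alpha(g) = a^{-1}\alpha(g)\,{}^{g}a$ from hypothesis (ii) against the precise form $b^{-1}\,{}^{g}b$ (or $b\,{}^{g}b^{-1}$) demanded by the definition of $Inn(G,A)$, keeping track of whether the inner cocycle should be built from $a$ or $a^{-1}$ and on which side, and ensuring the left $R$-action twist $^{(^{r}z)}(-)$ interacts correctly with hypothesis (i). The condition $a\in Ker\,\mu$ in (ii) is what guarantees that this inner map, viewed back in the bimodule, has trivial $R$-component contribution and genuinely witnesses an equivalence in $Der_c(G,(A,\mu))$ rather than merely in $Der_c(G,A)$; I would make sure to use that explicitly. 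Everything else — closure, the subgroup property, congruence of $\sim'$ — is already available from the earlier results and needs only to be invoked.
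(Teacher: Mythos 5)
Your route is genuinely different from the paper's: the paper disposes of Theorem \ref{Theorem 1.23.} in two lines by citing Inassaridze's result (\cite[Theorem 2.1]{Ina-h}) that under (i)--(ii) the algebraic quotient $Der(G,(A,\mu))/\!\sim$ is a group, and then Lemma \ref{ Lemma 1.22.}, i.e.\ the observation that since $\sim$ is a congruence its restriction $\sim'$ is a congruence on $Der_{c}(G,(A,\mu))$, so $H^{1}(G,(A,\mu))$ inherits the group structure. Your plan instead runs through $Inn(G,(A,\mu))$, and here is the first genuine gap: Theorem \ref{ Theorem 1.18.} and Remark \ref{ Remark 1.20.} are stated and proved only for \emph{crossed} topological $G$--$R$-bimodules; their proofs repeatedly use the identity $^{\mu(a)}b=\ ^{a}b$ for \emph{all} $a\in A$, which in the partially crossed setting of Theorem \ref{Theorem 1.23.} is only available when $\mu(a)\in[R,R]$. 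So you cannot simply invoke them; you would have to reprove the subgroup and normality statements under hypotheses (i)--(ii), which you do not do.

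The second gap is the middle step you flag as ``bookkeeping''. Your ``cleanest route'' inference --- that $\alpha\sim\ ^{(^{r}z)}\alpha$ via $a$ forces $^{(^{r}z)}\alpha^{-1}\alpha\in Inn(G,A)$ --- is not valid for a general witness $a$: the relation only gives $\bigl(^{c}\alpha^{-1}\alpha\bigr)(g)=\ ^{g}a^{-1}\,\alpha(g)^{-1}a\,\alpha(g)$ with $c=\ ^{r}z$, and this is not of the form $b\,^{g}b^{-1}$ in general. What rescues the step is exactly the clause $a\in Ker\mu$ in (ii), but for a reason different from the one you give: in a partially crossed module $\mu(a)=1\in[R,R]$ yields $b=\ ^{\mu(a)}b=\ ^{a}b=aba^{-1}$, so $Ker\mu\subseteq Z(A)$, and $^{g}a\in Ker\mu$ as well; hence $\alpha(g)^{-1}a\,\alpha(g)=a$ and $\bigl(^{c}\alpha^{-1}\alpha\bigr)(g)=a\,^{g}a^{-1}=Inn(a)(g)$ exactly. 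Your stated role for $a\in Ker\mu$ (``trivial $R$-component contribution'') is not the operative point, and without the centrality argument the membership in $Inn(G,A)$ is unjustified. Finally, note that Remark \ref{ Remark 1.19.} is stated only in the direction ``$H^{1}$ a group $\Rightarrow$ $Inn$ normal'', so your reduction also needs the identification of $\sim'$-classes with cosets of $Inn(G,(A,\mu))$ (which does hold, via Fact \ref{ Fact 1.10.}(2) and Remark \ref{Remark 1.16.}, but you only gesture at it). In short, the skeleton can be made to work, but as written it rests on results proved only in the crossed case and on an invalid inference at the decisive step; the quickest repair is either to supply the $Ker\mu\subseteq Z(A)$ computations explicitly or to follow the paper's shorter path through \cite[Theorem 2.1]{Ina-h} and Lemma \ref{ Lemma 1.22.}.
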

\begin{proof} By  \cite[Theorem 2.1]{Ina-h}, the quotient set $Der(G,(A,\mu))/\sim$ is a group. Thus, by Lemma \ref{ Lemma 1.22.}, $H^{1}(G,(A,\mu))$ is a group.  \end{proof}
Let $A$ and $B$ be topological $G$-modules and let $\mu:A\to B$ be a continuous $G$-homomorphism. We say that $\mu$ is a $G$-retraction whenever there is a continuous $G$-homomorphism $\rho:B\to A$ such that $\mu\rho=Id_{B}$. For example, $(G,Id_{G})$ is a crossed topological $G$-module and clearly, $Id_{G}:G\to G$ is a $G$-retraction.
\begin{thm}\label{ Theorem 1.24.} Let $(A,\mu)$ be a partially crossed topological $G-R$-bimodule and suppose that $\mu:A\to R$ is a $G$-retraction. Then, the following is an exact sequence.
$$1\to H^{1}(G,(A,\mu))\stackrel {\zeta} \to H^{1}(G,A)\stackrel{\mu^1}\to H^{1}(G,R)\to 1 $$
\end{thm}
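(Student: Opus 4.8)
The plan is to verify exactness of
$$1\to H^{1}(G,(A,\mu))\stackrel{\zeta}\to H^{1}(G,A)\stackrel{\mu^{1}}\to H^{1}(G,R)\to 1$$
at each of its three nontrivial spots, using the retraction $\rho:R\to A$ with $\mu\rho=\mathrm{Id}_{R}$ as the essential new ingredient. Injectivity of $\zeta$ at $H^{1}(G,(A,\mu))$ is already Theorem \ref{Theorem 1.12.}(i), so that spot is free. For exactness at $H^{1}(G,R)$, i.e. surjectivity of $\mu^{1}$, I would take a continuous cocycle $\gamma\in Der_{c}(G,R)$ and push it forward along $\rho$: since $\rho$ is a continuous $G$-homomorphism, $\rho\circ\gamma$ is a continuous crossed homomorphism $G\to A$, and $\mu^{1}([\rho\gamma])=[\mu\rho\gamma]=[\gamma]$, so $\mu^{1}$ is onto.

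The heart of the argument is exactness at $H^{1}(G,A)$, namely $\operatorname{im}\zeta=\ker\mu^{1}$. The inclusion $\operatorname{im}\zeta\subseteq\ker\mu^{1}$ should be essentially the computation already in the proof of Theorem \ref{Theorem 1.12.}(ii): if $[\alpha]=\zeta([(\alpha,r)])$ then $\mu\alpha(g)=r\,{}^{g}r^{-1}$, so $\mu\alpha$ is an inner crossed homomorphism, whence $\mu^{1}([\alpha])=[\mu\alpha]=[\mathrm{Inn}(r)]=1$ — but one should be slightly careful that the basepoint of $H^{1}(G,R)$ is the class of the trivial cocycle and that $[\mathrm{Inn}(r)]$ equals it. Conversely, given $[\alpha]\in\ker\mu^{1}$ with $\alpha\in Der_{c}(G,A)$, triviality of $[\mu\alpha]$ gives $r\in R$ with $\mu\alpha(g)=r\,{}^{g}r^{-1}$ for all $g$; then $(\alpha,r)$ is by definition an element of $Der(G,(A,\mu))$, and it lies in $Der_{c}(G,(A,\mu))$ since $\alpha$ is continuous, so $[\alpha]=\zeta([(\alpha,r)])\in\operatorname{im}\zeta$. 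Note this second direction does not even use $\rho$; the retraction is only needed for surjectivity of $\mu^{1}$.

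One subtlety I would flag as the main obstacle: the statement asserts exactness of a sequence written multiplicatively with $1$'s, but $H^{1}(G,A)$ and $H^{1}(G,R)$ are in general only pointed sets, not groups, and likewise $H^{1}(G,(A,\mu))$ need not carry a group structure (cf. Remark \ref{ Remark 1.19.} and Theorem \ref{Theorem 1.23.}). So "exact" here must be read in the pointed-set sense: $\operatorname{im}\zeta=(\mu^{1})^{-1}(\ast)$ where $\ast$ is the basepoint, plus $\zeta$ injective and $\mu^{1}$ surjective. I would state this convention explicitly at the start of the proof and then carry out the three verifications above; none involves a hard computation, the only care needed being to track basepoints and the passage between cocycles and cohomology classes, and to invoke the earlier results (Theorem \ref{Theorem 1.12.} and Definition \ref{Definition 1.9.}) rather than redo them.
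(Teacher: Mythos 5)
Your proposal is correct and takes essentially the same route as the paper's proof: injectivity of $\zeta$ is quoted from part (i) of Theorem \ref{Theorem 1.12.}, exactness at $H^{1}(G,A)$ is established by the same two cocycle computations (with the observation, also true in the paper, that the retraction is not needed there), and surjectivity of $\mu^{1}$ is obtained by composing a cocycle with $\rho$. Your explicit remark that exactness must be read in the pointed-set sense is a sound clarification that the paper leaves implicit.
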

\begin{proof} By  part (i) of Theorem \ref{Theorem 1.12.}, $\zeta$ is one to one. If $(\alpha,r)\in Der_{c}(G,(A,\mu))$, then $\mu^1\zeta([(\alpha,r)])=\mu^1([\alpha])=[\mu\circ\alpha]=[Inn(r)]=1$. Thus $Im\zeta\subset Ker\mu^1$. Vice versa, if $[\alpha]\in Ker\mu^1$, then $\mu\alpha$ is cohomologous to $\mathbf{1}$. Hence, there is $r\in R$ such that $\mu\alpha(g)=r^{g}r^{-1}$, for all $g\in G$. So, $(\alpha,r)\in Der_{c}(G,(A,\mu))$ and $\zeta([(\alpha,r)])=[\alpha]$. Therefore, $Ker\mu^1\subset Im\zeta$. Finally, we show that $\mu^1$ is onto. Suppose that $\alpha\in Der_{c}(G,R)$. Set $\beta=\rho\alpha$. Obviously, $\beta\in Der_{c}(G,A)$ and $\mu^1([\beta])=[\alpha]$.
\end{proof}
\begin{thm}\label{ Theorem 1.25.} Let $(A,\mu)$ be a partially crossed topological $G-R$-bimodule satisfying the following conditions:
\begin{enumerate}
  \item[(1)] $A$ and $R$ are abelian;
  \item[(2)] for any $r\in R$ and $(\alpha,s)\in Der_{c}(G,(A,\mu))$ there exists $a\in Ker\mu$ and $^{r}\alpha(g)=a^{-1}\alpha(g)^{g}a$;
  \item[(3)] $\mu:A\to R$  is a $G$-retraction.
\end{enumerate}
 Then, $H^{1}(G,A)\cong H^{1}(G,(A,\mu))\oplus H^{1}(G,R)$.
 \end{thm}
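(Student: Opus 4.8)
The plan is to combine Theorem \ref{ Theorem 1.24.} with Theorem \ref{Theorem 1.23.} and then upgrade the resulting split short exact sequence of groups to a direct sum. First I would observe that hypotheses (1)--(3) of Theorem \ref{ Theorem 1.25.} are exactly tailored so that both previous results apply: condition (3) is precisely what Theorem \ref{ Theorem 1.24.} needs, so we get the exact sequence
$$1\to H^{1}(G,(A,\mu))\stackrel{\zeta}\to H^{1}(G,A)\stackrel{\mu^{1}}\to H^{1}(G,R)\to 1,$$
while conditions (1) and (2) imply conditions (i) and (ii) of Theorem \ref{Theorem 1.23.} (abelianness of $R$ makes $H^{0}(G,R)\le R$ automatically normal, and (2) is a verbatim restatement of (ii) with $c=r$), so $H^{1}(G,(A,\mu))$ carries a group structure induced from $Der_{c}(G,(A,\mu))$, and by Remark \ref{ Remark 1.19.} it equals $Der_{c}(G,(A,\mu))/Inn(G,(A,\mu))$. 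Since $A$ is abelian, $Der_{c}(G,A)$ is an abelian group and $H^{1}(G,A)$ is its quotient by $Inn(G,A)=\{Inn(a)\}$, hence $H^{1}(G,A)$ is abelian; similarly $H^{1}(G,R)$ is abelian. Thus the displayed sequence is a short exact sequence of abelian groups once I check that $\zeta$ and $\mu^{1}$ are genuine homomorphisms for these group structures, which follows from the construction of $\star$ on $Der_{c}$ and the fact that $\mu$ and $\rho$ are $G$-homomorphisms.

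Next I would produce the splitting. Let $\rho:R\to A$ be the continuous $G$-retraction with $\mu\rho=Id_{R}$. The induced map $\rho^{1}:H^{1}(G,R)\to H^{1}(G,A)$, $[\beta]\mapsto[\rho\beta]$, is a homomorphism of abelian groups, and $\mu^{1}\rho^{1}=(\mu\rho)^{1}=Id_{H^{1}(G,R)}$ because $\mu\rho=Id_{R}$. Therefore $\rho^{1}$ is a section of $\mu^{1}$ in the category of abelian groups, and the standard splitting lemma gives
$$H^{1}(G,A)\cong \operatorname{Im}\zeta\oplus\operatorname{Im}\rho^{1}\cong H^{1}(G,(A,\mu))\oplus H^{1}(G,R),$$
where the first isomorphism is $[\alpha]\mapsto\big(\zeta^{-1}([\alpha]\cdot(\rho^{1}\mu^{1}[\alpha])^{-1}),\ \mu^{1}[\alpha]\big)$ and injectivity of $\zeta$ (Theorem \ref{Theorem 1.12.}(i)) guarantees $\zeta^{-1}$ is well defined on $\operatorname{Ker}\mu^{1}=\operatorname{Im}\zeta$.

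The main obstacle I anticipate is not the splitting-lemma bookkeeping but verifying that everything in sight is actually a homomorphism of the correct groups --- in particular that $\zeta$ respects the products: the product on $H^{1}(G,(A,\mu))$ comes from $(\alpha,r)\star(\beta,s)=(\alpha*\beta,rs)$ with $\alpha*\beta(g)={}^{r}\beta(g)\alpha(g)$, and one must confirm that under $\zeta$ this lands on the product $\alpha\beta$ in the abelian group $Der_{c}(G,A)$; here abelianness of $A$ together with condition (2) (which controls the discrepancy between ${}^{r}\beta$ and $\beta$ up to an inner automorphism, i.e.\ up to something trivial in $H^{1}(G,A)$) is exactly what forces $[{}^{r}\beta(g)\alpha(g)]=[\alpha(g)\beta(g)]$ in cohomology. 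Once that compatibility is nailed down, the direct-sum decomposition is formal. I would therefore devote the bulk of the write-up to this compatibility check and treat the rest as an application of the two cited theorems and the splitting lemma.
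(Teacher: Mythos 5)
Your proposal is correct and follows essentially the same route as the paper: Theorem \ref{Theorem 1.23.} (via conditions (1)--(2)) gives the group structure on $H^{1}(G,(A,\mu))$, the key compatibility check that $\zeta$ is a homomorphism is exactly the paper's computation $[\alpha\,{}^{r}\beta]=[\alpha][{}^{r}\beta]=[\alpha][\beta]$ using condition (2) with $a\in\operatorname{Ker}\mu$, Theorem \ref{ Theorem 1.24.} supplies exactness, and condition (3) yields the section $\rho^{1}$ with $\mu^{1}\rho^{1}=Id_{H^{1}(G,R)}$, after which the splitting lemma for abelian groups finishes the argument just as in the paper (which leaves that last step implicit).
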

\begin{proof} The condition (1) implies that $H^{1}(G,A)$ and $H^{1}(G,R)$ are abelian groups. By Theorem \ref{Theorem 1.23.}, $H^{1}(G,(A,\mu))$ is a group. The conditions (1) and (2) imply that the map $\zeta$ is a homomorphism, since
\begin{center}
$\zeta([(\alpha,r)[(\beta,s)])=\zeta([\alpha^{r}\beta],rs)])=[\alpha^{r}\beta]=[\alpha][^{r}\beta]=[\alpha][\beta]=\zeta([(\alpha,r)])\zeta([\beta,s)]).$
\end{center}
Hence, by Theorem \ref{ Theorem 1.24.}, $1\to H^{1}(G,(A,\mu))\stackrel {\zeta} \to H^{1}(G,A)\stackrel{\mu^1}\to H^{1}(G,R)\to 1 $ is an exact sequence (of groups and homomorphisms). By (3), there is a continuous $G$-homomorphism $\rho$ such that $\mu\rho=Id_{R}$. Thus, $\mu^1\rho^1=(\mu\rho)^1=(Id_{R})^1=Id_{H^{1}(G,R)}$. This completes the proof.
\end{proof}
 As an immediate result of  Theorem \ref{ Theorem 1.25.}, we have:
\begin{cor}\label{Corollary 1.26.} Let $(A,\mu)$ be a partially crossed topological $G$-module satisfying the following conditions:
\begin{itemize}
  \item[(1)] $G$ and $A$ are abelian;
  \item[(2)] $\mu:A\to G$ is a $G$-retraction.
\end{itemize}
Then, $H^{1}(G,A)\cong H^{1}(G,(A,\mu))\oplus Hom_{c}(G,G)$.
\end{cor}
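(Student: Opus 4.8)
The idea is to derive this as the case $R=G$ of Theorem~\ref{ Theorem 1.25.}, after viewing the partially crossed topological $G$-module $(A,\mu)$ as a partially crossed topological $G$-$G$-bimodule via Notice~\ref{ Notice 1.7.}; under this identification the outer $G$ and the inner $R=G$ act on $A$ by one and the same action, and $G$ acts on $R=G$ by conjugation, which is trivial since $G$ is abelian. Hypothesis (1) of Theorem~\ref{ Theorem 1.25.} ($A$ and $R=G$ abelian) and hypothesis (3) ($\mu\colon A\to R=G$ a $G$-retraction) are exactly the hypotheses of the corollary, so the only thing that has to be checked is hypothesis (2): for every $r\in R=G$ and every $(\alpha,s)\in Der_{c}(G,(A,\mu))$ there is $a\in Ker\mu$ with ${}^{r}\alpha(g)=a^{-1}\alpha(g)\,{}^{g}a$ for all $g\in G$.

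First I would observe that $\alpha$ takes values in $Ker\mu$. Indeed, because $G$ acts trivially on $R=G$ by conjugation, the condition $\mu\circ\alpha(g)=s\,{}^{g}s^{-1}$ from Definition~\ref{Definition 1.9.} collapses to $\mu\circ\alpha(g)=ss^{-1}=1$, so $\alpha(G)\subseteq Ker\mu$. Next, since $\alpha$ is a crossed homomorphism, $\alpha(gr)=\alpha(g)\,{}^{g}\alpha(r)$ and $\alpha(rg)=\alpha(r)\,{}^{r}\alpha(g)$; as $G$ is abelian the two left-hand sides coincide, so $\alpha(g)\,{}^{g}\alpha(r)=\alpha(r)\,{}^{r}\alpha(g)$, and cancelling in the abelian group $A$ yields ${}^{r}\alpha(g)\,\alpha(g)^{-1}={}^{g}\alpha(r)\,\alpha(r)^{-1}$ for all $g\in G$. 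Putting $a:=\alpha(r)$, which lies in $Ker\mu$ by the first observation, and reordering once more in $A$, this is exactly ${}^{r}\alpha(g)=a^{-1}\alpha(g)\,{}^{g}a$. Hence hypothesis (2) of Theorem~\ref{ Theorem 1.25.} holds.

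Theorem~\ref{ Theorem 1.25.} then gives $H^{1}(G,A)\cong H^{1}(G,(A,\mu))\oplus H^{1}(G,R)=H^{1}(G,(A,\mu))\oplus H^{1}(G,G)$, and it only remains to identify $H^{1}(G,G)$ with $Hom_{c}(G,G)$. Since $G$ is abelian, $G$ viewed as a $G$-module by conjugation is trivial, so a continuous crossed homomorphism $G\to G$ is just a continuous homomorphism, while every inner crossed homomorphism $g\mapsto a\,{}^{g}a^{-1}=aa^{-1}$ is trivial; thus $H^{1}(G,G)\cong Hom_{c}(G,G)$, and substituting gives the claimed isomorphism. I do not expect any serious obstacle here: the one point that deserves care is the cocycle manipulation in the middle step, together with the bookkeeping observation that the symbol ${}^{r}\alpha(g)$ appearing in Theorem~\ref{ Theorem 1.25.}(2) refers to the same $G$-action on $A$ that enters the crossed-homomorphism identity; everything else is a direct specialization.
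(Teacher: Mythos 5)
Your proposal is correct and follows the paper's own route: the paper derives Corollary \ref{Corollary 1.26.} as an immediate specialization of Theorem \ref{ Theorem 1.25.} with $R=G$ (via Notice \ref{ Notice 1.7.}), exactly as you do. Your explicit verification of hypothesis (2) — using that the conjugation action of the abelian $G$ on $R=G$ is trivial, so $\alpha$ lands in $Ker\mu$, and then taking $a=\alpha(r)$ via the crossed-homomorphism identity — together with the identification $H^{1}(G,G)\cong Hom_{c}(G,G)$, simply fills in the details the paper leaves implicit.
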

Note that if $A$ is an abelian topological $G$-module, then $(\alpha,r)\sim'(\alpha,1)$, for every $(\alpha,r)\in Der_{c}(G,(A,\mathbf{1}))$. Therefore, we have the next theorem.
\begin{thm}\label{Theorem 1.27.} Let $G$ be a topological group. Then,
 $$\tau_{A}: H^{1}(G,A)\rightarrow H^{1}(G,(A,\mathbf{1})),\ \tau_{A}([\alpha])=[(\alpha,1)]$$
is a natural isomorphism in the category of abelian topological $G$-modules.
\end{thm}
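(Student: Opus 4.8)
The plan is to verify that $\tau_A$ is a well-defined map, a bijection, a group homomorphism, and natural in $A$. First I would check well-definedness: given $\alpha \in Der_c(G,A)$ (an ordinary continuous crossed homomorphism, which coincides here with a continuous $1$-cocycle), the pair $(\alpha,1)$ lies in $Der_c(G,(A,\mathbf{1}))$ because $\mathbf{1}\circ\alpha(g)=1=1\cdot{}^g 1^{-1}$, so the defining condition $\mu\circ\alpha(g)=r\,{}^gr^{-1}$ of Definition \ref{Definition 1.9.} holds with $r=1$. If $\alpha\sim\beta$ in the usual sense, i.e.\ $\beta(g)=a^{-1}\alpha(g)\,{}^ga$ for some $a\in A$, then $(\alpha,1)\sim'(\beta,1)$ directly from Remark \ref{Remark 1.16.} (the condition (\ref{2.3}) on the $R$-component is automatic once (\ref{2.2}) holds, and in any case $R=A/Z(A)$ is irrelevant since $\mu=\mathbf 1$). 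Hence $\tau_A$ is well defined on cohomology classes.

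Next I would establish bijectivity. Surjectivity is immediate: every class in $H^1(G,(A,\mathbf{1}))$ is represented by some $(\alpha,r)$, and by the remark preceding the theorem, when $A$ is abelian $(\alpha,r)\sim'(\alpha,1)=\tau_A([\alpha])$ in the domain, so $[(\alpha,r)]$ lies in the image. For injectivity, suppose $\tau_A([\alpha])=\tau_A([\beta])$, i.e.\ $(\alpha,1)\sim'(\beta,1)$; by Remark \ref{Remark 1.16.} this gives $a\in A$ with $\beta(g)=a^{-1}\alpha(g)\,{}^ga$ for all $g$, which is exactly $[\alpha]=[\beta]$ in $H^1(G,A)$. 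For the homomorphism property I would compute, using that $A$ is abelian so $R$ may be taken trivial and the twisting $^r(-)$ collapses: $\tau_A([\alpha][\beta]) = \tau_A([\alpha\ast\beta])$ where $(\alpha\ast\beta)(g)={}^1\beta(g)\alpha(g)=\beta(g)\alpha(g)=\alpha(g)\beta(g)$ by commutativity; on the other side $(\alpha,1)\star(\beta,1)=(\alpha\ast\beta,1)$, so the two agree and $\tau_A$ preserves products. Since it is a bijective homomorphism of groups, $\tau_A$ is an isomorphism.

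Finally, naturality: given a continuous $G$-homomorphism $f\colon A\to A'$ of abelian topological $G$-modules, it induces $f^1\colon H^1(G,A)\to H^1(G,A')$ on the classical side and a morphism $(f,\mathrm{id})\colon (A,\mathbf{1})\to (A',\mathbf{1})$ of partially crossed bimodules (the compatibility conditions of Definition \ref{Definition 1.8.} are trivially met, with $\mathbf 1 = \mathbf 1 \circ f$), hence a map on $H^1(G,(-,\mathbf 1))$ sending $[(\alpha,r)]\mapsto[(f\circ\alpha,r)]$. Chasing $[\alpha]$ both ways around the square gives $[(f\circ\alpha,1)]$ in each case, so the square commutes. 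The verification is entirely routine; the only point requiring a moment's care—and the place I would be most careful—is the interaction between the $R$-component of the cocycles and the equivalence relation $\sim'$, namely making sure that the choice $R=A/Z(A)=\mathbf 1$'s image and the reduction of $\sim'$ to the condition (\ref{2.2}) (Remark \ref{Remark 1.16.}) legitimately let us ignore the second coordinate throughout. No serious obstacle is expected.
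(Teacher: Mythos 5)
Your proposal is correct and fills in exactly the routine verification that the paper itself compresses into the single phrase ``the proof is a standard argument'': well-definedness via Remark \ref{Remark 1.16.}, surjectivity from $(\alpha,r)\sim'(\alpha,1)$, injectivity, compatibility with the product $\star$ (where the twist $^{r}(-)$ is trivial since $r=1$ and $A$ is abelian), and naturality. So you take essentially the same (indeed the only intended) approach; the one small imprecision is your passing identification of $R$ with $A/Z(A)$, which is harmless here because for abelian $A$ that quotient is trivial and the second coordinate plays no role in $\sim'$.
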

\begin{proof} The proof is an standard argument.
\end{proof}
\section{ Change of Groups for the First Cohomology.}\label{section 3}
We introduce a notion called cocompatible triple and we get inflation-restriction exact sequence for the first non-abelian cohomology groups.
\begin{defn}\label{Definition 2.1.} Let $(A,\mu)$ be a partially crossed topological $G-R$-bimodule and $(A',\mu')$ a partially crossed topological $G'-R'$-bimodule. Suppose that $\phi:G'\to G$, $\varphi:R\to R'$ and $\psi:A\to A'$ are continuous homomorphisms. The triple $(\phi,\varphi,\psi)$ is called a cocompatible triple whenever the following conditions hold:
\begin{enumerate}
  \item $^{g'}\varphi(r)=\varphi(^{\phi(g')}r)$, $\forall g'\in G', r\in R$;
  \item $^{g'}\psi(a)=\psi(^{\phi(g')}a)$, $\forall g'\in G', a\in A$.
\end{enumerate}
\end{defn}
\begin{exa}\label{Example 2.1.} If $(A,\mu)$ is a partially crossed topological $G-R$-bimodule and $N$ a subgroup of $G$. Then, $(A,\mu)$ is a partially crossed $N-R$-bimodule. The triple
$(\imath, Id_{R}, Id_{A})$ is a cocompatible triple, where $\imath:N\to G$ is the inclusion map and $Id_{R}$ and $Id_{A}$ are the identity maps.
\end{exa}
\begin{exa}\label{Example 2.2.} If $N$ is a normal subgroup of $G$ and $\mu^{N}:A^{N}\to R^{N}$ is the restriction of $\mu:A\to R$. Clearly $(A^{N},\mu^{N})$ is a partially crossed topological $G/N-R^{N}$-bimodule. The triple $(\pi, \imath, \jmath)$ is a cocompatible triple, where $\pi:G\to G/N$ is the natural map, $\imath:R^{N}\to R$ and $\jmath: A^{N}\to A$ are the inclusion maps.
\end{exa}
Note that a cocompatible triple $(\phi,\varphi,\psi)$ induces a natural map as follows:
\begin{center}
$Der_{c}(G,(A,\mu))\to Der_{c}(G',(A',\mu'))$, $(\alpha,r)\mapsto (\psi\circ\alpha\circ\phi,\varphi(r))$
\end{center}
which induces naturally the map:
\begin{center}
$(\phi,\varphi,\psi)^{1}: H^{1}(G,(A,\mu))\to H^{1}(G',(A',\mu'))$, $[(\alpha,r)]\mapsto [(\psi\circ\alpha\circ\phi,\varphi(r))]$.
\end{center}
\begin{defn}\label{Definition 2.2.} Let $(A,\mu)$ be a partially crossed topological $G-R$-bimodule and $N$ a subgroup of $G$. The induced map $(\imath,Id_{R},Id_{A})^{1}$ is
called the restriction map and it is denoted by $Res^{1}: H^{1}(G,(A,\mu))\to H^{1}(N,(A,\mu))$.
\end{defn}
\begin{defn}\label{Definition 2.3.} Let $(A,\mu)$ be a partially crossed topological $G-R$-bimodule and $N$ a normal subgroup of $G$. The induced map $(\pi,\imath,\jmath)^{1}$ is
called the inflation map and it is denoted by $Inf^{1}: H^{1}(G/N,(A^{N},\mu^{N}))\to H^{1}(G,(A,\mu))$.
\end{defn}
\begin{lem}\label{Lemma 2.4.} Let $(A,\mu)$ be a partially crossed topological $G-R$-bimodule, and $N$ a normal subgroup of $G$. Then,
\begin{itemize}
\item[(i)]  $H^{1}(N,(A,\mu))$ is a $G/N$-set. Moreover, if $H^{1}(N,(A,\mu))$ is a group, then $H^{1}(N,(A,\mu))$ is a $G/N$-module.

\item[(ii)] $ImRes^{1}\subset H^{1}(N,(A,\mu))^{G/N}$.

\end{itemize}
\end{lem}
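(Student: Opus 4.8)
The plan is to prove the two assertions in order, relying on the explicit description of the action and the restriction map. For part (i), I would first define the $G/N$-action on $H^1(N,(A,\mu))$ by the formula suggested by Fact \ref{ Fact 1.10.} and the conjugation action of $G$ on $N$ and on $A$: for $gN \in G/N$ and $[(\alpha,r)] \in H^1(N,(A,\mu))$, set ${}^{gN}[(\alpha,r)] = [(\alpha_g, {}^{g}r)]$, where $\alpha_g(n) = {}^{g}\alpha(g^{-1}ng)$ for $n \in N$. The key verifications are: (a) $(\alpha_g,{}^{g}r)$ is again an element of $Der_c(N,(A,\mu))$ — continuity is clear, the crossed-homomorphism identity for $\alpha_g$ is a direct computation using that $\alpha$ is a crossed homomorphism and that $G$ acts on $A$ compatibly with conjugation in $N$, and the condition $\mu(\alpha_g(n)) = ({}^{g}r)\,{}^{n}({}^{g}r)^{-1}$ follows by applying ${}^{g}(-)$ to $\mu(\alpha(g^{-1}ng)) = r\,{}^{g^{-1}ng}r^{-1}$ together with Definition \ref{Definition 1.4.}(3); (b) the construction is compatible with $\sim'$, so it descends to $H^1(N,(A,\mu))$; (c) it is trivial on $N$, hence well defined on $G/N$, and it is a left action. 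If moreover $H^1(N,(A,\mu))$ is a group, one checks that ${}^{gN}(-)$ respects the product $\star$ on representatives, so $H^1(N,(A,\mu))$ becomes a $G/N$-module; this is again a routine manipulation of the formula $\alpha*\beta(n) = {}^{r}\beta(n)\alpha(n)$.

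For part (ii), take a class $[(\alpha,r)] \in H^1(G,(A,\mu))$; by Definition \ref{Definition 2.2.}, $Res^1([(\alpha,r)]) = [(\alpha|_N, r)]$. I must show this class is fixed by every $gN \in G/N$, i.e. that $(\alpha_g, {}^{g}r) \sim' (\alpha|_N, r)$ in $Der_c(N,(A,\mu))$. By Remark \ref{Remark 1.16.} it suffices to produce $a \in A$ with $\alpha_g(n) = a^{-1}\alpha(n)\,{}^{n}a$ for all $n \in N$ — the second condition (\ref{2.3}) is then automatic. The natural candidate is $a = \alpha(g^{-1})$ or $a = \alpha(g)^{-1}$ (up to the action): since $\alpha$ is a crossed homomorphism on all of $G$, expanding $\alpha(g^{-1}ng) = \alpha(g^{-1})\,{}^{g^{-1}}\alpha(ng) = \alpha(g^{-1})\,{}^{g^{-1}}(\alpha(n)\,{}^{n}\alpha(g))$ and applying ${}^{g}(-)$ gives $\alpha_g(n) = {}^{g}\alpha(g^{-1})\cdot\alpha(n)\cdot{}^{n}({}^{g}\alpha(g))$; now the cocycle identity $\alpha(e) = e$ forces ${}^{g}\alpha(g^{-1}) = \alpha(g)^{-1}$, so with $a = \alpha(g) $ we obtain exactly $\alpha_g(n) = a^{-1}\alpha(n)\,{}^{n}a$. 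Hence $(\alpha_g,{}^{g}r) \sim' (\alpha|_N, r)$ and $Res^1([(\alpha,r)])$ lies in $H^1(N,(A,\mu))^{G/N}$.

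The main obstacle I anticipate is bookkeeping rather than conceptual: one must be careful that all the conjugation/action identities (Definition \ref{Definition 1.4.}(3), Fact \ref{ Fact 1.10.}(1)) are invoked in the right spots so that $\alpha_g$ genuinely lands in $Der_c(N,(A,\mu))$ and so that the $R$-component ${}^{g}r$ transforms correctly; in particular checking that the relation $\sim'$ (including the mod $H^0(N,R)$ clause, which by Remark \ref{Remark 1.16.} is redundant once condition (\ref{2.2}) holds) is preserved requires applying ${}^{g}(-)$ uniformly to both the $A$-part and the $R$-part of the equivalence. I would organize the write-up as: first establish the action formula and its well-definedness (part (i)), then deduce the module structure under the group hypothesis, and finally do the short computation with $a = \alpha(g)$ for part (ii).
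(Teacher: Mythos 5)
Your proposal is correct and takes essentially the same route as the paper: define ${}^{g}(\alpha,r)=(\tilde\alpha,{}^{g}r)$ with $\tilde\alpha(n)={}^{g}\alpha(g^{-1}ng)$ on $Der_{c}(N,(A,\mu))$, check it descends to classes (using Remark \ref{Remark 1.16.}), note the induced $N$-action on classes is trivial so it factors through $G/N$, and prove (ii) by exhibiting $a=\alpha(g)$ with $\tilde\alpha(n)=a^{-1}\alpha(n)\,{}^{n}a$. The only blemish is a transcription slip in your intermediate formula: applying ${}^{g}(-)$ to ${}^{g^{-1}n}\alpha(g)$ yields ${}^{n}\alpha(g)$, not ${}^{n}({}^{g}\alpha(g))$, and it is this corrected form that your final identity (and the paper's computation) actually uses.
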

\begin{proof} (i) Since $N$ is a normal subgroup of $G$, then, there is an action of $G$ on $Der_{c}(N,(A,\mu))$ as follows:\\
 For every $g\in G$ we define $^{g}(\alpha,r)=(\tilde{\alpha},^{g}r)$ with $\tilde{\alpha}(n)=\-^{g}\alpha(^{g^{-1}}n), n\in N$.
\par In fact, $\tilde{\alpha}$ is continuous and we have:
\begin{center}
    $\tilde{\alpha}(mn)=\-^{g}\alpha(^{g^{-1}}(mn))=\-^{g}\alpha(^{g^{-1}}m^{g^{-1}}n)=
    \-^{g}\alpha(^{g^{-1}}m)\-^{mg}\alpha(^{g^{-1}}n)=\tilde{\alpha}(m)^{m}\tilde{\alpha}(n)$,
\end{center}
whence, $\tilde{\alpha}\in Der_{c}(N,A)$. Also it is easy to see that $\mu(\tilde{\alpha}(n))=(^{g}r)^{n}(^{g}r^{-1})$, for every $n\in N$. Hence, $^{g}(\alpha,r)\in Der_{c}(N,(A,\mu))$. It is clear that $^{gh}(\alpha,r)=\ ^{g}(^{h}(\alpha,r))$. It is easy to verify that  $^{g}((\alpha,r)(\beta,s))=\-^{g}(\alpha,r)^{g}(\beta,s)$. Consequently $Der_{c}(N,(A,\mu))$ is a $G$-module. Now suppose that $(\alpha,r)\sim(\beta,s)$. Then, there is an $a\in A$ such that $\beta(n)=a^{-1}\alpha(n)^{n}a, \forall n\in N$. Thus, for every $g\in G$, $n\in N$,
\begin{center}
 $^{g}\beta(^{g^{-1}}n)=\-^{g}a^{-1}(^{g}\alpha(^{g^{-1}}n))^{g}(^{^{g^{-1}}n}a)$.
\end{center}
Therefore,
\begin{center}
    $\tilde{\beta}(n)=(^{g}a)^{-1}\tilde{\alpha}(n)^{n}(^{g}a)$.
\end{center}
Therefore, by Remark \ref{Remark 1.16.}, $^{g}(\alpha,r)\sim \ ^{g}(\beta,s)$.  Thus, the action of $G$ on $Der_{c}(N,(A,\mu))$ induces an action of $G$ on $H^{1}(N,(A,\mu))$. Moreover if $H^{1}(N,(A,\mu))$ is a group, then $H^{1}(N,(A,\mu))$ is a $G$-module. It is sufficient to show for every $m\in N$, $^{m}(\alpha,r)\sim (\alpha,r)$. In fact, for every $n\in N$
\begin{center}
    $^{m}\alpha(^{m^{-1}}n)=\-^{m}\alpha(m^{-1}nm)=\ ^{m}(\alpha(m^{-1})^{m^{-1}}\alpha(n)^{m^{-1}n}\alpha(m))
    =\ ^{m}\alpha(m^{-1})\alpha(n)^{n}\alpha(m)=\alpha(m)^{-1}\alpha(n)^{n}\alpha(m)$.
\end{center}
Thus, $H^{1}(N,(A,\mu))$ is a $G/N$-set. Moreover if $H^{1}(N,(A,\mu))$ is a group, then $H^{1}(N,(A,\mu))$ is a $G/N$-module.
\par (ii) By a similar argument as in (i), for every $(\alpha,r)\in Der_{c}(G,(A,\mu))$
\begin{center}
    $^{g}\alpha(^{g^{-1}}n)=\alpha(g)^{-1}\alpha(n)^{n}\alpha(g)$, $\forall g\in G, n\in N$,
\end{center}
whence, $^{gN}(\alpha\circ\imath,r)\sim (\alpha\circ\imath,r), \forall gN\in G/N$.
 \end{proof}
\begin{thm}\label{Theorem 2.5.} Let $(A,\mu)$ be a partially crossed topological $G-R$-bimodule and $N$  a normal subgroup of $G$. Then,  there is an exact sequence
\begin{center}
    $ $\xymatrix{1\ar[r]&H^{1}(G/N,(A^{N},\mu^{N}))\ar[r]^{\ \ \ \ \  Inf^{1}}&H^{1}(G,(A,\mu))\ar[r]^{Res^{1} \ \ \ \ }& H^{1}(N,(A,\mu))^{G/N}}$.$
\end{center}
\end{thm}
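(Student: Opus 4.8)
The plan is to establish exactness at the two relevant spots, namely injectivity of $Inf^1$ and the equality $\mathrm{Ker}\,Res^1 = \mathrm{Im}\,Inf^1$ (the inclusion $\mathrm{Im}\,Res^1 \subset H^1(N,(A,\mu))^{G/N}$ is already Lemma \ref{Lemma 2.4.}(ii), and injectivity of the leftmost arrow $1\to H^1(G/N,(A^N,\mu^N))$ just restates injectivity of $Inf^1$). Throughout I would work with explicit representatives: a class in $H^1(G,(A,\mu))$ is $[(\alpha,r)]$ with $\alpha:G\to A$ a continuous crossed homomorphism and $\mu\alpha(g)=r\,{}^g r^{-1}$; a class in $H^1(G/N,(A^N,\mu^N))$ is $[(\bar\alpha,\bar r)]$ with $\bar\alpha:G/N\to A^N$, $\bar r\in R^N$; and $Inf^1$ sends this to $[(\bar\alpha\circ\pi,\bar r)]$ where $\pi:G\to G/N$ is the quotient map, while $Res^1$ sends $[(\alpha,r)]$ to $[(\alpha|_N,r)]$. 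I would also keep Remark \ref{Remark 1.16.} in hand, so that the equivalence relation $\sim'$ is governed purely by the cocycle condition $\beta(g)=a^{-1}\alpha(g)\,{}^g a$ and the auxiliary $R$-component condition \eqref{2.3} can be ignored.

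First I would prove injectivity of $Inf^1$. Suppose $(\bar\alpha,\bar r)$ is a continuous derivation for $G/N$ with $Inf^1[(\bar\alpha,\bar r)]$ trivial, i.e.\ $(\bar\alpha\circ\pi,\bar r)\sim' (\mathbf 1,1)$ in $Der_c(G,(A,\mu))$. By Remark \ref{Remark 1.16.} there is $a\in A$ with $\bar\alpha(\pi(g)) = a^{-1}\,{}^g a$ for all $g\in G$. Restricting to $g\in N$ gives $1 = a^{-1}\,{}^g a$, so $a\in A^N$; and then $\bar\alpha(\pi(g)) = a^{-1}\,{}^{\pi(g)}a$ makes sense in $A^N$ because the $G$-action on $A^N$ factors through $G/N$ (this is where one uses that $N$ is normal and $A^N$ is $G/N$-stable). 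Hence $(\bar\alpha,\bar r)\sim' (\mathbf 1, \text{something})$ already in $Der_c(G/N,(A^N,\mu^N))$, so $[(\bar\alpha,\bar r)]$ is trivial. This step is essentially bookkeeping once one is careful that the conjugating element $a$ lies in the fixed submodule.

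Next, exactness in the middle. The inclusion $\mathrm{Im}\,Inf^1\subset\mathrm{Ker}\,Res^1$ is easy: for $(\bar\alpha\circ\pi, \bar r)$ the restriction to $N$ is $\bar\alpha\circ\pi|_N = \bar\alpha(\mathbf 1) = \mathbf 1$ constant, so $Res^1 Inf^1[(\bar\alpha,\bar r)] = [(\mathbf 1,\bar r)]$, which is the base point. For the reverse inclusion, take $(\alpha,r)\in Der_c(G,(A,\mu))$ with $Res^1[(\alpha,r)]$ trivial, so there is $b\in A$ with $\alpha(n) = b^{-1}\,{}^n b$ for all $n\in N$. Replace $(\alpha,r)$ by the equivalent derivation $(\alpha',r')$ with $\alpha'(g) = b\,\alpha(g)\,{}^g b^{-1}$; then $\alpha'|_N \equiv \mathbf 1$. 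The point is now to show $\alpha'$ factors through $\pi$ and lands in $A^N$. That $\alpha'(gn)=\alpha'(g)$ for $n\in N$ follows from $\alpha'(gn)=\alpha'(g)\,{}^g\alpha'(n)=\alpha'(g)$; that $\alpha'(ng)=\alpha'(g)$ uses the crossed-homomorphism identity together with $\alpha'|_N=\mathbf 1$ and the computation in the proof of Lemma \ref{Lemma 2.4.}(ii) showing ${}^g\alpha'({}^{g^{-1}}n)=\alpha'(g)^{-1}\alpha'(n)\,{}^n\alpha'(g)$, which forces the values of $\alpha'$ to be $N$-fixed. Once $\alpha'$ is $G/N$-invariant with values in $A^N$, it defines $\bar\alpha:G/N\to A^N$, and $r'\in R$ satisfies $\mu\alpha'(g) = r'\,{}^g r'^{-1}$; restricting to $N$ gives $r'\in R^N$ after possibly adjusting by $H^0$, so $(\bar\alpha, r')\in Der_c(G/N,(A^N,\mu^N))$ with $Inf^1[(\bar\alpha,r')] = [(\alpha',r')] = [(\alpha,r)]$.

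The main obstacle is the middle step: producing a genuine $G/N$-derivation into $A^N$ from a $G$-derivation that merely dies on $N$. The subtlety is that after normalizing $\alpha'|_N = \mathbf 1$ one must check both that the values ${}^g$-conjugate correctly so as to lie in $A^N$, and that the $R$-component $r'$ can be chosen in $R^N$ (not just in $R$) — the latter may require absorbing an element of $H^0(G,R)$, which is legitimate precisely because of the mod-$H^0(G,R)$ clause built into the original Inassaridze relation, equivalently because of Remark \ref{Remark 1.16.} which lets us ignore that clause entirely. A secondary point requiring care throughout is continuity: all the conjugating elements and adjusted cocycles must be continuous, which is automatic here since $A$ is a topological group, the action is continuous, and $\alpha,\alpha'$ differ by translation by fixed elements.
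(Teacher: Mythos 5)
Your argument for exactness at $H^{1}(G,(A,\mu))$ is essentially the paper's proof: you normalize a class killed by $Res^{1}$ by the conjugating element ($\alpha'(g)=b\,\alpha(g)\,{}^{g}b^{-1}$, the paper's $\beta$), check $\alpha'$ vanishes on $N$, use the crossed-homomorphism identity to see that its values are $N$-fixed and that it factors through $G/N$, and observe that the new $R$-component ($\mu(b)r$) lies in $R^{N}$ because $\mu\alpha'(n)=1$ forces ${}^{n}(\mu(b)r)=\mu(b)r$; together with Lemma \ref{Lemma 2.4.}(ii) and Remark \ref{Remark 1.16.} this part is fine and matches the paper step for step.

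The genuine gap is at the left-hand end. You only show that the \emph{kernel} of $Inf^{1}$ is trivial: you assume $Inf^{1}[(\bar\alpha,\bar r)]$ equals the base point and deduce $[(\bar\alpha,\bar r)]$ is trivial. But $H^{1}(G,(A,\mu))$ is in general only a pointed set (cf.\ Remark \ref{ Remark 1.19.}: $Inn(G,(A,\mu))$ need not be normal, so there is no group structure to transport), and for maps of pointed sets ``trivial kernel'' does not imply ``one to one''. The theorem, as the paper proves it, asserts genuine injectivity of $Inf^{1}$, which is the statement with actual content in the inflation--restriction sequence. The repair is immediate and uses exactly your computation run on two arbitrary classes rather than one class and the base point: if $Inf^{1}[(\bar\alpha,\bar r)]=Inf^{1}[(\bar\beta,\bar s)]$, Remark \ref{Remark 1.16.} gives $a\in A$ with $\bar\beta(\pi(g))=a^{-1}\bar\alpha(\pi(g))\,{}^{g}a$ for all $g\in G$; evaluating at $g\in N$, where $\bar\alpha(\pi(g))=\bar\beta(\pi(g))=1$, yields $a\in A^{N}$, so ${}^{g}a={}^{gN}a$ and the same relation already holds in $Der_{c}(G/N,(A^{N},\mu^{N}))$, whence $[(\bar\alpha,\bar r)]=[(\bar\beta,\bar s)]$. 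You should also state the $R$-component of your normalized pair explicitly (it is $\mu(b)r$, needed so that the pair stays in $Der_{c}$); your phrase ``after possibly adjusting by $H^{0}$'' is vaguer than necessary, since $\mu(b)r\in R^{N}$ holds outright.
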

\begin{proof} The map $Inf^{1}$ is one to one: If $(\alpha,r), (\beta,s)\in Der_{c}(G/N,A^{N})$ and $Inf^{1}[(\alpha,r)]=Inf^{1}[(\beta,s)]$, then $(\alpha\pi,r)\sim (\beta\pi,s)$. Thus, there is an $a\in A$ such that $\beta\pi(g)=a^{-1}\alpha\pi(g)^{g}a, \forall g\in G$. Hence, $\beta(gN)=a^{-1}\alpha(gN)^{g}a,  \forall gN\in G/N$. If $g\in N$, then $\alpha(gN)=\beta(gN)=1$ and hence, $a\in A^{N}$. This implies that $^{g}a=\-^{(gN)}a, \forall g\in G$. Consequently, $(\alpha,r) \sim (\beta,s)$, i.e., $Inf^{1}$ is one to one.
\par Now we show that  $KerRes^{1}= ImInf^{1}$. Since $Res^{1}Inf^{1}[(\alpha,r)]=[(\alpha(\pi\imath),r)]=[(\mathbf{1},1)]$, then $ImInf^{1}\subset KerRes^{1}$.
\par Let $[(\alpha,r)] \in KerRes^{1}$. Then, there is an $a\in A$ such that $\alpha(n)=a^{-1}$$^{n}a$, $\forall n\in N$. Consider $(\beta,\mu(a)r)\in Der_{c}(G,(A,\mu))$ with $\beta(g)=a\alpha(g)^{g}a^{-1}$, $\forall g\in G$. Since $\beta(n)=1, \forall n\in N$ then, $\beta$  induces  the continuous crossed homomorphism $\gamma:G/N\rightarrow A$ via $\gamma(gN)=\beta(g)$. Also $Im\gamma\subset A^{N}$, since for all $n\in N$,
\begin{center}
    $^{n}\gamma(gN)=\-^{n}\beta(g)=\beta(ng)=\beta(g)^{g}\beta(g^{-1}ng)=\beta(g)=\gamma(gN)$.
\end{center}
Clearly, $\mu(a)r\in H^{0}(N,R)$ and $(\gamma,\mu(a)r)\in Der_{c}(G/N,(A^{N},\mu^{N}))$. Hence, $Inf^{1}[(\gamma,\mu(a)r)]=[(\gamma\pi,\mu(a)r)]=[(\beta,\mu(a)r)]=[(\alpha,r)]$. Consequently, $KerRes^{1}\subset ImInf^{1}$.
\end{proof}
%
%
%
%
%
%
%
%
%
%
%
%
%
%
%
%
%
\section{ Coboundary Maps and Exact Sequence of Cohomologies.}\label{section 4}
 In this section we will obtain a seven-term exact sequence of non-abelian cohomologies up to dimension 2.
   \par Suppose that $1\to (A,\mathbf{1})\stackrel{\iota}\to (B,\mu)\stackrel{\pi}\to (C,\lambda)\to 1$  is an exact sequence of partially crossed topological $G-R$-bimodules such that $\iota$ is an homeomorphic embedding. Thus, we can identify $A$ with $\iota(A)$.
  \par
 Now we define a coboundary map $\delta^{0}:H^{0}(G,C)\rightarrow H^{1}(G,A)$.
 \\Let $c\in H^{0}(G,C)$,  $b\in B$ with $\pi(b)=c$. Then, we define $\delta^{0}(c)$ by $\delta^{0}(c)(g)=b^{-1}$$^{g}b, \forall g\in G$.
 It is obvious that $\delta^{0}(c)$ is a continuous crossed homomorphism. Let $b^{'} \in B$, $\pi(b^{'})=c$. Then, $b^{'}=ba$ for some $a\in A$. So,
\begin{center}
$(b^{'})^{-1}$$^{g}b^{'}=a^{-1}b^{-1}$$^{g}b^{g}a=a^{-1}\delta^{0}(c)(g)^{g}a$.
\end{center}
Thus, the crossed homomorphism obtained from $b{'}$ is cohomologous in $A$ to the one obtained from $b$,
i.e., $\delta^{0}$ is well-defined.
\\ \par Now,  suppose that $1\to (A,\mathbf{1})\stackrel{\iota}\to (B,\mu)\stackrel{\pi}\to (C,\lambda)\to 1$  is an exact sequence of partially crossed homomorphism such that $\iota$ is a homeomorphic embedding and in addition $\pi$ has a continuous section $s:C\rightarrow B$.
 \par We construct a coboundary map $\delta^{1}:H^{1}(G,(C,\lambda))\to H^{2}(G,A)$. Here, $H^{2}(G,A)$ is defined as in \cite{Hu}.
 \\ Let $\alpha \in H^{1}(G,(C,\lambda))$ and $s:C\rightarrow B$ be a continuous section for $\pi$. Define $ \delta^{1}$ by $[(\alpha,r)]\mapsto [\tilde{\alpha}]$, where $\tilde{\alpha}(g,h)=s\alpha(g)$ $^{g}(s\alpha(h))(s\alpha(gh))^{-1}$. Clearly, $ \tilde{\alpha}$ is a continuous map.
 \par We show that $\tilde\alpha$ is a factor set with values in $A$,
   and independent of the choice of the continuous section $s$. Also $\delta^{1}$ is well-defined.
 \\Since $\alpha$ is a crossed homomorphism, we get:
 \begin{center}
 $\pi(\tilde\alpha(g,h))=\pi($$s\alpha(g)$ $^{g}s\alpha(h)(s\alpha(gh))^{-1}=\alpha(g)^{g}\alpha(h)(\alpha(gh))^{-1}=1$.
 \end{center}
 Thus, $\tilde\alpha$ has values in $A$.
 \par Next, we show that $\tilde\alpha$ is a factor set, i.e.,
\begin{equation}\label{*}
^{g}\tilde\alpha(h,k)\tilde\alpha(g,hk)=\tilde\alpha(gh,k)\tilde\alpha(g,h), \ \forall g, h, k \in G.
\end{equation}
First we calculate the left hand side of  (\ref{*}). For simplicity, take $b_{g}=s\alpha(g)$, $\forall g\in G$. Since $A\subset Ker\mu$, then
\begin{center}
 $^{g}\tilde\alpha(h,k)\tilde\alpha(g,hk)=\-^{g}(b_{h}\-^{h}b_{k}b_{hk}^{-1})(b_{g}\-^{g}b_{hk}b_{ghk}^{-1})=
 b_{g}\-^{g}(b_{h}\-^{h}b_{k}b_{hk}^{-1})\-^{g}b_{hk}b_{ghk}^{-1}
$ \end{center}
\begin{center}
$=b_{g}\-^{g}(b_{h}\-^{h}b_{k})^{g}b_{hk}b_{ghk}^{-1}=b_{g}\-^{g}b_{h}\-^{gh}b_{k}b_{ghk}^{-1}$,
\end{center}
On the other hand,
\begin{center}
$\tilde\alpha(gh,k)\tilde\alpha(g,h)=(b_{gh}\-^{gh}b_{k}b_{ghk}^{-1})(b_{g}\-^{g}b_{h}b_{gh}^{-1})
=b_{g}\-^{g}b_{h}\-^{gh}b_{k}b_{ghk}^{-1}$.
\end{center}
Therefore, $\tilde\alpha$ is a factor set.
\par Next, we prove that $[\tilde\alpha]$ is independent of the choice of the continuous sections. Suppose that $s$ and $u$ are continuous sections for $\pi$. Set $b_{g}=s\alpha(g)$ and $b^{'}_{g}=u\alpha(g)$. Since $\pi(b_{g}^{'})=\alpha(g)=\pi(b_{g})$, then $b_{g}^{'}=b_{g}a_{g}$ for some $a_{g}\in A$. Obviously the function $\kappa:G\rightarrow A$, $g\mapsto a_{g}$, is continuous. Thus, \begin{center}
    $\bar\alpha(g,h)=b^{'}_{g}\-^{g}b^{'}_{h}b^{'}_{gh}=b_{g}\kappa(g)\-^{g}b_{h}\-^{g}\kappa(h)(\kappa(gh))^{-1}b^{-1}_{gh})$
\end{center}
\begin{center}
    =$(\kappa(g)^{g}\kappa(h)(\kappa(gh))^{-1})(b_{g}\-^{g}b_{h}b_{gh}^{-1})=\delta_{G}^{1}(\kappa)(g,h)\tilde\alpha(g,h),$
\end{center}
where $\delta_{G}^{1}(\kappa)(g,h)=\-^{g}\kappa(h)(\kappa(gh))^{-1}\kappa(g).$
Consequently, $\bar\alpha$ and $\tilde\alpha$ are cohomologous.
\\
\par Suppose that $(\alpha,r)$ and $(\beta,s)$ are cohomologous in $Der_{c}(G,(C,\lambda))$. Then, there is  $c\in C$ such that $\beta(g)=c^{-1}\alpha(g)^{g}c$, $\forall g\in G$.
\\Let $s:C\rightarrow A$ be a continuous section for $\pi$. Since

$$\pi(s(c^{-1}\alpha(g)^{g}c))=\pi(s(c)^{-1}s\alpha(g)^{g}s(c)),$$ then,  there exists a unique $\gamma(g)\in ker\pi=A$ such that $$\gamma(g)(s(c)^{-1}s\alpha(g)^{g}s(c))=s(c^{-1}\alpha(g)^{g}c).$$ It is clear that the map $\gamma:G\rightarrow A$, $g\mapsto \gamma(g)$ is  continuous. Therefore,\\
\\$\tilde\beta(g,h)=s\beta(g).^{g}s\beta(h).(s\beta(gh))^{-1}$\\  \\$=s(c^{-1}\alpha(g)^{g}c).^{g}s(c^{-1}\alpha(h)^{h}c).(s(c^{-1}\alpha(gh)^{gh}c))^{-1}$\\
\\$=\gamma(g)[s(c)^{-1}s\alpha(g)^{g}s(c)].^{g}(\gamma(h)[s(c)^{-1}s\alpha(h)^{h}s(c)])$\\
\\ $.(\gamma(gh)[s(c)^{-1}s\alpha(gh)^{gh}s(c)])^{-1}=\ ^{g}\gamma(h)\gamma(gh)^{-1}\gamma(g)
[s(c)^{-1}s\alpha(g)^{g}s(c)]$\\
\\$.^{g}[s(c)^{-1}s\alpha(h)^{h}s(c)].[s(c)^{-1}s\alpha(gh)^{gh}s(c)]^{-1}$\\
\\$=\delta_{G}^{1}(\gamma)(g,h)[s(c)^{-1}s\alpha(g)^{g}s\alpha(h)(a\alpha(gh))^{-1}s(c)]$\\
\\$=\delta_{G}^{1}(\gamma)(g,h)[s(c)^{-1}\delta^{1}(\alpha)(g,h)s(c)]=\delta_{G}^{1}(\gamma)(g,h)\tilde\alpha(g,h).$\\

The last equality is obtained from the fact that
$\tilde\alpha(g,h) \in Ker\mu$ and $s(c)\in B$. Now,
note that   $\tilde\alpha$ is cohomologous to
$\tilde\beta$, when $(\alpha,r)$ is cohomologous to $(\beta,s)$.
 Thus, $\delta^{1}$ is well-defined.
\\ \par Recall that a short sequence
\begin{center}
$1\to(A,\mathbf{1})\stackrel{\imath}\to (B,\mu)\stackrel{\pi}\to(C,\lambda)\to 1$
\end{center}
 of partially crossed topological $G-R$-bimodules is exact, if the following diagram is commutative and the top row is exact.
\begin{center}
$\xymatrix{1\ar[r]&A\ar[r]^{\imath}\ar[dr]_{\mathbf{1}}&B\ar[r]^{\pi}\ar[d]^{\mu}&C\ar[r]\ar[dl]^{\lambda}&1
\\ & & R & &}$
\end{center}
The above exact sequence is called a proper extension with continuous sections, if $1\to A\stackrel{\imath}\to B\stackrel{\pi}\to C\to 1$ is a proper extension and $\pi$ has a continuous section.
\begin{thm}\label{Theorem 3.1.} Let $1\to (A,\mathbf{1})\stackrel{\iota}\to (B,\mu)\stackrel{\pi}\to (C,\lambda)\to 1$ be a proper extension of partially crossed topological $G-R$-bimodules with continuous sections. Then, the following sequence is exact.
\begin{center}
$0\to H^{0}(G,A)\stackrel{\iota^{0}}\to H^{0}(G,B)\stackrel{\pi^{0}}\to H^{0}(G,C)\stackrel{\delta^{0}}\to H^{1}(G,A)\stackrel{\iota^{1}}\to $
\end{center}
 $ \ \ \ \ \ \ \ \ H^{1}(G,(B,\mu))\stackrel{\pi^{1}}\to H^{1}(G,(C,\lambda))\stackrel{\delta^{1}}\to H^{2}(G,A)$
 \end{thm}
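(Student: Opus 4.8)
The plan is to verify exactness at each of the six junctions of the sequence by the usual diagram chase, using at the two non-abelian spots ($H^1$) the description of cocycles and the equivalence relation $\sim'$ as simplified in Remark~\ref{Remark 1.16.}, and at the abelian ends the standard arguments for $H^0$ and $H^2$. Concretely, I would proceed left to right. Exactness at $H^0(G,A)$ and $H^0(G,B)$ is immediate from $\iota$ being an injective homeomorphic embedding and $\ker\pi=\iota(A)$, so $\iota^0$ is injective and $\ker\pi^0=\operatorname{im}\iota^0$. For exactness at $H^0(G,C)$: if $c\in H^0(G,C)$ lies in $\operatorname{im}\pi^0$, pick the $G$-fixed lift $b$ and then $\delta^0(c)(g)=b^{-1}\,{}^gb=1$; conversely if $\delta^0(c)=1$ in $H^1(G,A)$, there is $a\in A$ with $b^{-1}\,{}^gb=a^{-1}\,{}^ga$ for all $g$, whence $ba^{-1}$ is $G$-fixed and maps to $c$, so $c\in\operatorname{im}\pi^0$.

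For exactness at $H^1(G,A)$ (the junction $\delta^0,\iota^1$): $\iota^1\delta^0(c)$ is the class of $g\mapsto b^{-1}\,{}^gb$ viewed in $B$, which is the inner crossed homomorphism $\operatorname{Inn}(b^{-1})$, hence trivial in $H^1(G,(B,\mu))$ — here I must also track the $R$-component, pairing $b^{-1}\,{}^gb$ with $\mu(b^{-1})\in R$, which is exactly of the form making it equivalent to $(\mathbf 1,1)$ by Remark~\ref{Remark 1.16.}. Conversely, if $\alpha\in Der_c(G,A)$ becomes trivial in $H^1(G,(B,\mu))$ after composing with $\iota$, there is $b\in B$ with $\iota\alpha(g)=b^{-1}\,{}^gb$; applying $\pi$ gives ${}^gc=c$ where $c=\pi(b)$, i.e. $c\in H^0(G,C)$, and then $\delta^0(c)=[\alpha]$. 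For exactness at $H^1(G,(B,\mu))$: $\pi^1\iota^1=1$ is clear since $\pi\iota=\mathbf 1$; conversely if $(\beta,s)\in Der_c(G,(B,\mu))$ has $\pi^1[(\beta,s)]=1$, there is $c\in C$ with $\pi\beta(g)=c^{-1}\alpha'(g)^gc$ where $\alpha'=\mathbf 1$, i.e. $\pi\beta(g)={}^g c\,c^{-1}$ after absorbing; lifting $c$ to $b\in B$ and replacing $(\beta,s)$ by the $\sim'$-equivalent cocycle $g\mapsto b\,\beta(g)\,{}^gb^{-1}$ lands it in $Der_c(G,A)$, giving a preimage under $\iota^1$. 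One must check the new cocycle is again a crossed homomorphism and that its $R$-component is controlled — this uses $A\subset\ker\mu$ and that $\mu$ is a $G$-homomorphism.

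For exactness at $H^1(G,(C,\lambda))$ (junction $\pi^1,\delta^1$): $\delta^1\pi^1[(\beta,s)]$ is represented by $\tilde\gamma(g,h)=s\pi\beta(g)\,{}^g(s\pi\beta(h))\,(s\pi\beta(gh))^{-1}$, and choosing the section cleverly (or comparing with the section $\beta$ itself, which is a genuine lift of $\pi\beta$) shows this factor set is a coboundary, so the composite is trivial. Conversely, if $[(\alpha,r)]\in H^1(G,(C,\lambda))$ has $\delta^1[(\alpha,r)]=0$, then $\tilde\alpha(g,h)=\delta_G^1(\kappa)(g,h)$ for some continuous $\kappa:G\to A$; then $g\mapsto \kappa(g)^{-1}s\alpha(g)$ (up to the right placement of inverses) is a continuous crossed homomorphism into $B$ lifting $\alpha$, and pairing it with a suitable element of $R$ gives a class in $H^1(G,(B,\mu))$ mapping to $[(\alpha,r)]$; the $R$-component exists because $\lambda\alpha(g)=r\,{}^gr^{-1}$ and $\mu$ agrees with $\lambda$ through $\pi$. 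Finally, exactness at $H^1(G,(C,\lambda))$ on the $\delta^1$ side and the fact that $\delta^1$ lands in $H^2(G,A)$ were already established in the paragraphs preceding the theorem (well-definedness, factor-set property, independence of the section).

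I expect the main obstacle to be the bookkeeping of the $R$-components $r,s$ throughout: at every non-abelian junction one must not only produce the right crossed homomorphism but also exhibit the accompanying element of $R$ and verify the defining relation $\mu\circ(\text{cocycle})(g)=r\,{}^gr^{-1}$, using repeatedly that $\iota$ maps into $\ker\mu$, that $\pi$ intertwines $\mu$ and $\lambda$, and Remark~\ref{Remark 1.16.} to discard the side condition~(\ref{2.3}) on these components. The set-theoretic (pointed-set) exactness is routine; keeping the bimodule data consistent is the delicate part.
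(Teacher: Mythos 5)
Your proposal is correct and follows essentially the same route as the paper's proof: a junction-by-junction chase with the same key constructions (conjugating a cocycle of $B$ by a lift $b$ of $c$ so that it lands in $A$, correcting the section lift $s\alpha$ by the continuous cochain $\kappa$ trivializing $\tilde\alpha$), with Remark \ref{Remark 1.16.} absorbing the $R$-component bookkeeping and the well-definedness of $\delta^{0}$, $\delta^{1}$ taken from the discussion preceding the theorem. The only differences are cosmetic: the paper quotes \cite[Theorem 4.1]{Sah1} for the segment ending at $H^{1}(G,A)$ instead of reproving it, and $\iota^{1}\delta^{0}(c)$ carries $R$-component $1$ rather than $\mu(b^{-1})$, which is harmless since the two pairs are $\sim'$-equivalent.
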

\begin{proof} We may assume that $\iota$ is the inclusion map.
\par 1. By \cite[Theorem 4.1] {Sah1}, the sequence $$0\to H^{0}(G,A)\stackrel{\iota^{0}}\to H^{0}(G,B)\stackrel{\pi^{0}}\to H^{0}(G,C)\stackrel{\delta^{0}}\to H^{1}(G,A)$$ is exact.
\par 2. Exactness at $H^{1}(G,A)$: Let $c\in H^{0}(G,C)$. Then, there is $b\in B$ such that $\pi(b)=c$. So, $$\iota^{1}\delta^{0}(c)(g)=\iota(\delta^{0}(c)(g))=\iota(b^{-1}\-^{g}b)=b^{-1}\-^{g}b.$$ Consequently, $\iota^{1}\delta^{0}(c)\sim \mathbf{1}$. Conversely, let $[\alpha]\in Ker\iota^{1}$. Then, there is  $b\in B$ such that $\alpha(g)=b^{-1}\-^{g}b, \forall g\in G$. So, $\pi(b^{-1}$$^{g}b)=1, \forall g\in G$. Take $c=\pi(b)$. Hence, $c\in H^{0}(G,C)$. Thus, $\delta^{0}(c)\sim \alpha$.
\par 3. Exactness at $H^{1}(G,(B,\mu))$: Since $\pi^{1}\iota^{1}([\alpha])=\pi^{1}([(\alpha,1)]=[(\pi\circ\alpha,1)]=[(\mathbf{1},1)]=1$, then, $Im\iota^{1}\subset Ker\pi^{1}$. Conversely, let $[(\beta,r)]\in ker\pi^{1}$. Then, there is  $c\in C$ such that $\pi\beta(g)=c^{-1}$$^{g}c$, for all $g\in G$ and $r=\lambda(c)^{-1}z$, for some $z\in H^{0}(G,R)$. Let $b \in B$ and $c=\pi(b)$. Therefore, $\pi(\beta(g))=\pi(b^{-1}$$^{g}b)$, $\forall g\in G$. On the other hand, the map $\tau_{b}:A\rightarrow A$, $a\mapsto b^{-1}ab$, is a topological isomorphism, because $A$ is a normal subgroup of $B$. So, for every $g\in G$ there is a
 unique element $a_{g}\in G$ such that $\beta(g)=(b^{-1}a_{g}b)(b^{-1}$$^{g}b)$.
Thus, $\beta(g)=b^{-1}a_{g}\-^{g}b$, $\forall g\in G$.
 Hence, $a_{g}=b\beta(g)^{g}b^{-1}$, $\forall g\in G$.
Obviously, the map $\alpha:G\rightarrow A$, $g\mapsto a_{g}$, is a continuous crossed homomorphism,
and $\iota^{1}([\alpha])=[(\alpha,1)]=[(\alpha,z)]=[(\alpha,\lambda(c)r)]=[(\alpha,\mu(b)r]=[(\beta,r)]$.
\par 4. Exactness at $H^{1}(G,(C,\lambda))$: Let $[(\beta,r)] \in H^{1}(G,B)$ and $s$ be a continuous section for $\pi$. Then
$\delta^{1}\circ \pi^{1}([(\beta,r)])=[\widetilde{\pi\beta}]$.  There is a continuous map $z:G\to A$ such that $s\pi\beta(g)=\beta(g)z(g)$. Thus,
\begin{center}
$\widetilde{\pi\beta}(g,h)=s(\pi\beta(g))^{g}s(\pi\beta(h))(s(\pi\beta(gh)))^{-1}=\beta(g)^{g}\beta(h)\beta(gh)^{-1}\delta_{G}^{1}(z)(g,h)=\delta_{G}^{1}(z)(g,h).$
\end{center}
 So, $Im\pi^{1}\subset Ker\delta^{1}$. Conversely, let $[(\gamma,r)]\in ker\delta^{1}$. Then, there is a continuous function $\alpha:G\to A$ such that $\tilde{\gamma}=\delta_{G}^{1}(\alpha)$, where $[\tilde{\gamma}]=\delta^{1}([(\gamma,r)])$. Thus,
 \begin{center}
 $\tilde{\gamma}(g,h)=s\gamma(g)^{g}s\gamma(h)(s\gamma(gh))^{-1}=\-^{g}\alpha(h)\alpha(gh)^{-1}\alpha(g), \forall g,h\in G$.
 \end{center}
 Assume $\beta(g)=s\gamma(g)\alpha(g)^{-1}, \forall g\in G$. Since $A\subset Ker\mu$,
then $\beta$ is a continuous crossed homomorphism from $G$ to $B$, and $\pi\beta=\gamma$.  Also $(\beta,r)\in Der_{c}(G,(B,\mu))$ because $\mu\beta(g)=\mu(s\gamma(g)\alpha(g)^{-1})=\mu(s\gamma(g))=\lambda\gamma(g)=r^{g}r^{-1}$.
 Hence, $\pi^{1}([(\beta,r)])=[(\gamma,r)]$. This completes the proof.
 \end{proof}
\section{Principal homogeneous spaces over $(A,\mu)$ - a new
definition of $H^1(G, (A,\mu))$}\label{section 5}
Serre \cite{Ser} showed that if $A$ is a topological $G$-module in which $A$ is discrete and $G$ a profinite group then there is a bijection between the set, $P(A)$, of all classes of principal homogeneous spaces over  $A$ and $H^1(G, A)$. Similarly, Inassaridze \cite{Ina-h} (algebraically)  defined the $G$-torsor over a crossed $G$-module $(A,\mu)$ and showed that there is a natural isomorphism between the group, $E(G,A)$, of all  classes of $G$-torsors over $(A,\mu)$ and the Guin's first non-abelian cohomology group $H^{1}(G,(A,\mu))$ (see \cite[Theorem 4.2] {Ina-h}).
We will show that the first non-abelian cohomology of topological groups is closely related with principle homogeneous spaces.
\begin{defn}\label{ Definition 4.1.} A principal homogeneous space (or topological torsor) over  a partially crossed topological $G-R$-bimodule $(A,\mu)$  is a pair $(P,f)$ consisting of a $G$-space $P$,
on which $A$ acts on the right (in a manner compatible with $G$) so that for any $p\in P$ the natural map $e_{p}:A\to P$, $a\mapsto pa$ is a homeomorphism, and $f$ is a $G$-map from $P$ to $R$ such that $f(pa)=\mu^{-1}(a)f(p)$ for any $p\in P$, $a\in A$.
\end{defn}
\begin{defn}\label{ Definition 4.2.} It is said that  principal homogeneous spaces $(P,f)$ and $(Q,g)$ over a partially crossed topological $G-R$-bimodule $(A,\mu)$ are isomorphic if there is a homemorphism $\nu:P\to Q$ compatible with the actions of $G$ and $A$ such that $f(p)=g\circ\nu(p)$ mod $H^{0}(G,R)$ for any $p\in P$.
\end{defn}
 Obviously, the isomorphism in Definition \ref{ Definition 4.2.} is an equivalence relation in the set of all principle homogenous spaces over $(A,\mu)$. We denote by $\mathcal{P}(A,\mu)$ the set  of all
classes of principal homogeneous spaces over $(A,\mu)$. Suppose that $A$ acts on the right on itself by translations. Obviously, $(A,\mu^{-1})$ is a principal homogeneous space over $(A,\mu)$. We call it trivial topological torsor over $(A,\mu)$. Thus, $\mathcal{P}(A,\mu)\not=\emptyset$.
\begin{thm}\label{Theorem 4.3.} Let $(A,\mu)$ be a partially crossed topological $G-R$-bimodule. There is a bijection between $\mathcal{P}(A,\mu)$ and $H^{1}(G,(A,\mu))$.
\end{thm}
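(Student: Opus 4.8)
The plan is to construct mutually inverse maps between $\mathcal{P}(A,\mu)$ and $H^{1}(G,(A,\mu))$ by the classical ``choose a base point'' trick, adapted to the topological and partially-crossed setting. First I would fix a principal homogeneous space $(P,f)$ and pick a point $p\in P$. For each $g\in G$ the point $^{g}p$ again lies in $P$, so by the homeomorphism $e_{p}:A\to P$ there is a unique $\alpha_{p}(g)\in A$ with $^{g}p=p\,\alpha_{p}(g)$; this defines a map $\alpha_{p}:G\to A$, which is continuous because it is the composite of $g\mapsto{}^{g}p$ with $e_{p}^{-1}$ (the notation introduced in section~\ref{section 1}). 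A short computation using the compatibility of the $G$- and $A$-actions shows $\alpha_{p}$ is a crossed homomorphism: from $^{gh}p={}^{g}({}^{h}p)={}^{g}(p\,\alpha_{p}(h))={}^{g}p\,{}^{g}\alpha_{p}(h)=p\,\alpha_{p}(g)\,{}^{g}\alpha_{p}(h)$ we read off $\alpha_{p}(gh)=\alpha_{p}(g)\,{}^{g}\alpha_{p}(h)$. Next, set $r_{p}=f(p)\in R$. Applying $f$ to $^{g}p=p\,\alpha_{p}(g)$ and using that $f$ is a $G$-map together with $f(pa)=\mu^{-1}(a)f(p)=\mu(a)^{-1}f(p)$ gives $^{g}r_{p}=\mu(\alpha_{p}(g))^{-1}r_{p}$, i.e. $\mu(\alpha_{p}(g))=r_{p}\,{}^{g}r_{p}^{-1}$, so $(\alpha_{p},r_{p})\in Der_{c}(G,(A,\mu))$.

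The second step is to check this class is independent of the chosen base point. If $q=p\,a$ is another point of $P$ with $a\in A$, then $^{g}q={}^{g}p\,{}^{g}a=p\,\alpha_{p}(g)\,{}^{g}a=q\,a^{-1}\alpha_{p}(g)\,{}^{g}a$, so $\alpha_{q}(g)=a^{-1}\alpha_{p}(g)\,{}^{g}a$, and $r_{q}=f(pa)=\mu(a)^{-1}r_{p}$; by Remark~\ref{Remark 1.16.} (equivalently by the defining relations (\ref{2.2})--(\ref{2.3})) this says $(\alpha_{p},r_{p})\sim'(\alpha_{q},r_{q})$. Likewise an isomorphism $\nu:(P,f)\to(Q,g)$ sends a base point $p$ to $\nu(p)$, and since $\nu$ is $G$- and $A$-equivariant one gets $\alpha_{\nu(p)}=\alpha_{p}$, while $f(p)=g(\nu(p))\bmod H^{0}(G,R)$ matches the second coordinate up to $H^{0}(G,R)$; hence isomorphic torsors give the same cohomology class. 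This yields a well-defined map $\Phi:\mathcal{P}(A,\mu)\to H^{1}(G,(A,\mu))$.

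For the inverse, given $(\alpha,r)\in Der_{c}(G,(A,\mu))$ I would build a torsor on the underlying space of $A$: take $P_{\alpha}=A$ with the right translation action of $A$ on itself, define a (twisted) left $G$-action by $g\cdot a={}^{g}a\cdot\alpha(g)^{-1}$... wait — more precisely so that the base point $1$ has $g\cdot 1=\alpha(g)^{-1}$ is awkward; instead set $g\cdot a=\alpha(g)\,{}^{g}a$, which is continuous, satisfies the cocycle identity precisely because $\alpha$ is a crossed homomorphism, and is compatible with right translations; and define $f_{\alpha}:A\to R$ by $f_{\alpha}(a)=\mu(a)^{-1}r$, which is a $G$-map by the relation $\mu(\alpha(g))=r\,{}^{g}r^{-1}$ and satisfies $f_{\alpha}(ab)=\mu(b)^{-1}f_{\alpha}(a)=\mu^{-1}(b)f_{\alpha}(a)$. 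Then $(P_{\alpha},f_{\alpha})$ is a principal homogeneous space over $(A,\mu)$, and I would check that a choice of another representative $(\beta,s)\sim'(\alpha,r)$ produces an isomorphic torsor (the witnessing $a\in A$ from (\ref{2.2}) gives the homeomorphism $x\mapsto a^{-1}x$ or $x\mapsto ax$, and (\ref{2.3}) ensures the $f$-values agree mod $H^{0}(G,R)$), so we get $\Psi:H^{1}(G,(A,\mu))\to\mathcal{P}(A,\mu)$. Finally $\Psi\circ\Phi=\mathrm{id}$ because choosing the base point $p$ identifies $(P,f)$ with $(P_{\alpha_{p}},f_{\alpha_{p}})$ via $e_{p}$, and $\Phi\circ\Psi=\mathrm{id}$ because in $(P_{\alpha},f_{\alpha})$ the base point $1\in A$ recovers exactly $(\alpha,r)$.

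The main obstacle I anticipate is the bookkeeping around the second coordinate $r$ and the subgroup $H^{0}(G,R)$: one must verify that the twisted $G$-action defining $P_{\alpha}$ is genuinely continuous (this uses continuity of $\alpha$ and of the $G$-action on $A$), that $f_{\alpha}$ is well-behaved, and above all that the equivalences match up on the nose — the relation $\sim'$ on $Der_{c}$ only determines $r$ modulo $H^{0}(G,R)$, and Definition~\ref{ Definition 4.2.} only asks $f(p)=g(\nu(p))\bmod H^{0}(G,R)$, so the two ``mod $H^{0}(G,R)$'' conditions have to be reconciled carefully; Fact~\ref{ Fact 1.10.}(1) and Remark~\ref{Remark 1.16.} are the tools that make this go through. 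Everything else is a routine but lengthy diagram-chase in topological groups.
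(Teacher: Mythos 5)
Your proposal is correct and follows essentially the same route as the paper: the same base-point construction $^{g}p=p\,\alpha_{p}(g)$ with $r_{p}=f(p)$ giving a class in $H^{1}(G,(A,\mu))$, the same twisted torsor $P_{\alpha}=A$ with $g\cdot a=\alpha(g)\,{}^{g}a$ and $f(a)=\mu(a)^{-1}r$ for the inverse, and the same use of Remark~\ref{Remark 1.16.} to handle the $H^{0}(G,R)$ ambiguity in the second coordinate. No substantive difference from the paper's argument.
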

\begin{proof} If $[(P,f)]\in \mathcal{P}(A,\mu)$, we choose a point $p\in P$. If $g\in G$, one has $^{g}p\in P$, therefore
there exists a unique $\alpha_{p}^{P}(g)\in A$ such that $^{g}p=p\alpha_{p}^{P}(g)$. One can see that $g\mapsto \alpha_{p}^{P}(g)$ is a
continuous crossed homomorphism and $\mu\alpha_{p}^{P}(g)=f(p)f(p\alpha_{p}^{P}(g))^{-1}=f(p)f(^{g}p)^{-1}=f(p)^{g}f(p)^{-1}$. Thus, $(\alpha_{p}^{P},f(p))\in Der_{c}(G,(A,\mu))$. On the one hand, substituting $pa$ for $p$ changes this crossed homomorphism into $g\mapsto a^{-1}\alpha_{p}^{P}(g)^{g}a$, since $^{g}(pa)=\ ^{g}p^{g}a=p\alpha_{p}^{P}(g)^{g}a=(pa)a^{-1}\alpha_{p}^{P}(g)^{g}a$. Also, $f(pa)=\mu(a)^{-1}f(p)$. Therefore, $(\alpha_{q}^{P},f(q))\sim (\alpha_{p}^{P},f(p))$ for any $q\in P$. On the other hand, let $(P',f')$ be isomorphic to $(P,f)$. Thus, there is a homeomorphism $\nu:P\to P'$ with properties in Definition \ref{ Definition 4.2.}. Let $p'\in P'$ and $\nu(p)=p'$. Then $^{g}p'=\ ^{g}\nu(p)=\nu(^{g}p)=\nu(p\alpha_{p}^{P}(g))=\nu(p)\alpha_{p}^{P}(g)$. So, $\alpha_{p'}^{P'}=\alpha_{p}^{P}$. Therefore by Remark \ref{Remark 1.16.}, $ (\alpha_{p'}^{P'},f'(p'))\sim  (\alpha_{p}^{P},f(p))$. One may thus define $\lambda:\mathcal{P}(A,\mu)\to H^{1}(G,(A,\mu))$ via $[(P,f)]\mapsto [(\alpha_{p}^{P},f(p))]$.
\par Vise versa, one defines $\gamma:H^{1}(G,(A,\mu))\to \mathcal{P}(A,\mu)$ as follows:
\newline If $(\alpha,r)\in Der_{c}(G,(A,\mu))$, denote by $P_{\alpha}$ the group $A$ on which $G$ acts by the following \emph{twisted formula}:
$$ga=\alpha(g)^{g}a.$$
Let now $A$ act on the right on $P_{\alpha}$ by translations. Define $f_{r}:P_{\alpha}\to R$ by $f_{r}(a)=\mu^{-1}(a)r$. We have $$f_{r}(ga)=\mu^{-1}(\alpha(g)^{g}a)r=\ ^{g}\mu(a)^{-1}\mu^{-1}(\alpha(g))r=\ ^{g}(\mu(a)^{-1}r)=\ ^{g}f_{r}(a)$$ for any $g\in G$. Thus, $f_{r}$ is a $G$-map. In addition, for any $a,b\in A$, $$f_{r}(ab)=\mu(ab)^{-1}r=\mu(b)^{-1}(\mu(a)^{-1}r)=\mu(b)^{-1}f_{r}(a).$$ Therefore, $(P_{\alpha},f_{r})$ is a principal
homogeneous space over $(A,\mu)$.
\par If $(\alpha,r)\sim (\beta,s)$, then there is $a\in A$ such that $\beta(g)=a^{-1}\alpha(g)^{g}a$ for any $g\in G$, and $s=\mu(a)^{-1}rt$ for some $t\in H^{0}(G,R)$. Define $\nu:P_{\alpha}\to P_{\beta}$ by $p\mapsto a^{-1}p$. For every $g\in G, p\in P_{\alpha}$, then $$\nu(gp)=\nu(\alpha(g)^{g}p)=a^{-1}\alpha(g)^{g}p=\beta(g)^{g}{a^{-1}}^{g}p=\beta(g)^{g}(a^{-1}p)=g\nu(p).$$
Thus, $\nu$ is a $G$-map. Obviously, $\nu$ is compatible with the action $A$ on $P_{\alpha}$ and $P_{\beta}$.
\par For $p\in P_{\alpha}$, we get $$f_{s}(\nu(p))=f_{s}(a^{-1}p)=\mu(p^{-1}a)s=\mu^{-1}(p)\mu(a)\mu(a)^{-1}rt=\mu^{-1}(p)rt.$$
Therefore, $(P_{\alpha},f_{r})$ is isomorphic to $(P_{\beta},f_{s})$. Consequently, one can define $\gamma$ by $\gamma([(\alpha,r)])=[(P_{\alpha},f_{r})]$.
\par We will show that $\gamma\lambda=Id_{P(A,\mu)}$. Let $(Q,g)$ be a principle homogeneous space over $(A,\mu)$. Fix $q\in Q$. We define $\nu:Q\to P_{\alpha_{q}^{Q}}$ by $p\mapsto e_{q}^{-1}(p)$. Obviously $\nu$ is a homeomorphism. For any $h\in G$, $$\nu(^{h}p)=e_{q}^{-1}(^{h}p)=e_{q}^{-1}(^{h}(qe_{q}^{-1}(p))=e_{q}^{-1}(^{h}q^{h}e_{q}^{-1}(p))=\ ^{h}e_{q}^{-1}(p)=\ ^{h}\nu(p).$$ i.e., $\nu$ is a $G$-map. Also for any $a\in A$, $$\nu(pa)= e_{q}^{-1}(pa)=e_{q}^{-1}(qe_{q}^{-1}(p)a)=e_{q}^{-1}(p)a.$$ In addition for any $p\in P$, $$f_{g(q)}\circ\nu(p)=\mu^{-1}(\nu(p))g(q)=g(q\nu(p))=g(qe_{q}^{-1}(p))=g(p).$$ This implies that $\gamma\lambda=Id_{P(A,\mu)}$.
\par Conversely, let $(\beta,s)\in Der_{c}(G,(A,\mu))$. For every $p\in P_{\beta}$, $f_{s}(p)=\mu^{-1}(p)s$ and also for every $g\in G$, $p\alpha_{p}^{P_{\beta}}(g)=gp=\alpha(g)^{g}p$. Thus, $(\beta,s)\sim (\alpha_{p}^{P_{\beta}},f_{s}(p))$. This shows that $\lambda\gamma=Id_{H^{1}(G,(A,\mu))}$.
\end{proof}
\begin{rem}\label{last}
In Definition \ref{ Definition 4.1.}, we may consider the $G$-map $f:P\to R$ as a continuous one.
\end{rem}
\begin{noti}\label{ Notice 4.4.} If $Der_{c}(G,(A,\mu))$ induces naturally a group structure on $H^{1}(G,(A,\mu))$, then we will define a group product on $\mathcal{P}(A,\mu)$.
Let $[(P_{1},f_{1})], [(P_{2},f_{2})]\in \mathcal{P}(A,\mu)$. Fix $p_{1}\in P_{1}$ and $p_{2}\in P_{2}$. Set $P=A$ and let $A$ act on the right on $P$ by translations. Consider a new action of $G$ on $P$ by the formula:
$$ga=\ ^{f_{1}(p_{1})}\alpha_{p_{2}}^{P_{2}}(g)\alpha_{p_{1}}^{P_{1}}(g)^{g}a$$
Define the map $f:P\to R$ by $f(a)=\mu^{-1}(a)f_{1}(p_{1})f_{2}(p_{2})$. We will show that $(P,f)$ is a principle homogeneous space over $(A,\mu)$.
Using $\lambda$, $\gamma$ in the proof of Theorem \ref{Theorem 1.23.}, we consider the classes $[(\alpha_{p_{1}}^{P_{1}},f_{1}(p_{1})]$ and $[(\alpha_{p_{2}}^{P_{2}},f_{2}(p_{2})]$ corresponding to the classes $[(P_{1},f_{1})]$ and $[(P_{2},f_{2})]$, respectively. Since $H^{1}(G,(A,\mu))$ is group, then $$[(\alpha_{p_{1}}^{P_{1}},f_{1}(p_{1})][(\alpha_{p_{2}}^{P_{2}},f_{2}(p_{2})]=[(^{f_{1}(p_{1})}\alpha_{p_{2}}^{P_{2}}\alpha_{p_{1}}^{P_{1}},f_{1}(p_{1})f_{2}(p_{2}))].$$
Obviously $\gamma([(^{f_{1}(p_{1})}\alpha_{p_{2}}^{P_{2}}\alpha_{p_{1}}^{P_{1}},f_{1}(p_{1})f_{2}(p_{2}))])=[(P,f)]$. Thus, one can define a group product $\circ$ as follows: $$[(P_{1},f_{1})]\circ [(P_{2},f_{2})]=[(P,f)].$$
\end{noti}
\begin{cor}\label{Corollary 4.5.} Let $(A,\mu)$ be a partially crossed topological $G$-module. Then, $(\mathcal{P}(A,\mu),\circ)$ is a group.
\end{cor}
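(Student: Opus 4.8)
The plan is to derive Corollary \ref{Corollary 4.5.} as a direct consequence of Theorem \ref{Theorem 4.3.} together with Theorem \ref{Theorem 1.23.}. First I would recall that a (partially) crossed topological $G$-module $(A,\mu)$ is, by Notice \ref{ Notice 1.7.}, the same data as a partially crossed topological $G-G$-bimodule, so all the machinery of the preceding sections applies with $R=G$. The strategy is: (1) verify that $(A,\mu)$, viewed as a partially crossed topological $G-G$-bimodule, satisfies the two hypotheses (i) and (ii) of Theorem \ref{Theorem 1.23.}; (2) conclude from that theorem that $Der_{c}(G,(A,\mu))$ induces a group structure on $H^{1}(G,(A,\mu))$; (3) transport this group structure across the bijection of Theorem \ref{Theorem 4.3.} and use the explicit product $\circ$ described in Notice \ref{ Notice 4.4.} to see that $(\mathcal{P}(A,\mu),\circ)$ is a group isomorphic to $H^{1}(G,(A,\mu))$.

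For step (1), condition (i) of Theorem \ref{Theorem 1.23.} asks that $H^{0}(G,R)$ be a normal subgroup of $R$; here $R=G$ and $G$ acts on itself by conjugation, so $H^{0}(G,G)=\{g\in G : hgh^{-1}=g\ \forall h\in G\}=Z(G)$, which is always a normal (indeed central) subgroup of $G$. Condition (ii) asks that for every $c\in H^{0}(G,G)=Z(G)$ and every $(\alpha,r)\in Der_{c}(G,(A,\mu))$ there exist $a\in\operatorname{Ker}\mu$ with ${}^{c}\alpha(g)=a^{-1}\alpha(g)^{g}a$ for all $g\in G$. Since $c$ is central in $G$ and $G$ acts on $A$, the crossed-module identity ${}^{\mu(a')}b={}^{a'}b$ should let me rewrite the action of $c$ on $\alpha(g)$; the natural candidate is to take $a$ related to $\alpha(c)$ or to an element whose $\mu$-image is $c$-conjugation-trivial. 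Concretely, I expect ${}^{c}\alpha(g)=\alpha(c)^{-1}\alpha(cg)$ on one hand and $\alpha(cg)=\alpha(gc)=\alpha(g)\,{}^{g}\alpha(c)$ on the other (using $cg=gc$), which gives ${}^{c}\alpha(g)=\alpha(c)^{-1}\alpha(g)\,{}^{g}\alpha(c)$; then $a=\alpha(c)$ works provided $\alpha(c)\in\operatorname{Ker}\mu$, and indeed $\mu\alpha(c)=r\,{}^{c}r^{-1}=rr^{-1}=1$ because $c\in Z(G)$ acts trivially on $R=G$ by conjugation. So both hypotheses hold automatically.

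Given step (1), Theorem \ref{Theorem 1.23.} immediately yields that $H^{1}(G,(A,\mu))$ carries a group structure induced from $Der_{c}(G,(A,\mu))$ (equivalently, by Remark \ref{ Remark 1.19.}, $Inn(G,(A,\mu))$ is normal in $Der_{c}(G,(A,\mu))$ and the quotient is this group). Then the hypothesis of Notice \ref{ Notice 4.4.} is met, so the operation $\circ$ on $\mathcal{P}(A,\mu)$ is defined, and the computation already carried out there shows that the bijection $\lambda:\mathcal{P}(A,\mu)\to H^{1}(G,(A,\mu))$ of Theorem \ref{Theorem 4.3.} satisfies $\lambda([(P_{1},f_{1})]\circ[(P_{2},f_{2})])=\lambda([(P_{1},f_{1})])\,\lambda([(P_{2},f_{2})])$, i.e. $\lambda$ is a group isomorphism. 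Since $H^{1}(G,(A,\mu))$ is a group and $\lambda$ is a bijective homomorphism, $(\mathcal{P}(A,\mu),\circ)$ inherits the group axioms. I expect the only real work is step (1) — checking hypothesis (ii) of Theorem \ref{Theorem 1.23.} — and in particular making sure the element $a=\alpha(c)$ (or whatever explicit choice) genuinely lies in $\operatorname{Ker}\mu$ and genuinely conjugates $\alpha$ the right way; everything after that is a formal appeal to the theorems and the construction in Notice \ref{ Notice 4.4.}. One mild subtlety to flag is that $\circ$ as defined depends a priori on the chosen base points $p_1,p_2$, but this independence is exactly what the well-definedness of $\gamma\circ\lambda$ in the proofs of Theorems \ref{Theorem 4.3.} and the identity $\lambda\gamma=\mathrm{Id}$ guarantee, so no extra argument is needed.
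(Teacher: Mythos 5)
Your proposal is correct and follows the same route as the paper: the paper's proof simply invokes Theorem \ref{Theorem 1.23.} ("it is clear that\ldots") together with Notice \ref{ Notice 4.4.}, and you do exactly this while additionally spelling out why the hypotheses of Theorem \ref{Theorem 1.23.} hold for $R=G$ (namely $H^{0}(G,G)=Z(G)$ is normal, and $a=\alpha(c)\in\operatorname{Ker}\mu$ with ${}^{c}\alpha(g)=\alpha(c)^{-1}\alpha(g)\,{}^{g}\alpha(c)$, using $\mu\alpha(c)=r\,{}^{c}r^{-1}=1$ for central $c$). Your explicit verification is sound and just fills in the detail the paper leaves implicit.
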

\begin{proof} It is clear that by Theorem \ref{Theorem 1.23.}, $H^{1}(G,(A,\mu))$ is a group and Notice \ref{ Notice 4.4.} implies that $\mathcal{P}(A,\mu)$ is a group by the action $\circ$.
\end{proof}


‎


\bigskip
\bigskip
{\footnotesize {\bf H.E. Koshkoshi}\; \\ {Department of  Mathematics, Faculty of Mathematical Sciences}, {University
of Guilan,} {Rasht, Iran.}\\
{\tt Email: h.e.koshkoshi@guilan.ac.ir}

{\footnotesize {\bf H. Sahleh}\; \\ {Department of  Mathematics, Faculty of Mathematical Sciences}, {University
of Guilan, P.O.Box 1914,} {Rasht, Iran.}\\
{\tt Email: sahleh@guilan.ac.ir}\\

\end{document}